\documentclass[12pt,a4]{amsart}
\usepackage[a4paper, left=28mm, right=28mm, top=28mm, bottom=34mm]{geometry}
\usepackage{amsmath}
\usepackage{amssymb}
\usepackage{amsthm}
\usepackage{bm}
\newtheorem{dfn}{Definition}[section]
\newtheorem{thm}[dfn]{Theorem}
\newtheorem{prop}[dfn]{Proposition}

\newtheorem{cor}[dfn]{Corollary}
\newtheorem{exm}[dfn]{Example}
\newtheorem{rem}[dfn]{Remark}
\newtheorem{conj}[dfn]{Conjecture}

\newtheorem{claim}[dfn]{Claim}

\usepackage{amscd}
\usepackage{mathrsfs}

\def\Q{\mathbb{Q}}

\def\Z{\mathbb{Z}}

\def\F{\mathbb{F}}

\def\Spec{\mathop{\mathrm{Spec}}\nolimits}
\def\Pic{\mathop{\mathrm{Pic}}\nolimits}
\def\hatBr{\mathop{\widehat{\mathrm{Br}}}\nolimits}

\def\Spf{\mathop{\mathrm{Spf}}\nolimits}
\def\L{\mathop{\mathscr{L}}\nolimits}
\def\M2dh{\mathop{M^{(h)}_{2d}}\nolimits}
\def\M2d{\mathop{M_{2d}}\nolimits}
\def\g1{\mathop{\gamma_1}\nolimits}
\def\g2{\mathop{\gamma_2}\nolimits}
\def\O{\mathop{\mathscr{O}}\nolimits}

\title[Existence of supersingular reduction for families of $K3$ surfaces]{Existence of supersingular reduction for families of $K3$ surfaces with large Picard number in positive characteristic}
\address{Department of Mathematics\\ Faculty of Science\\ Kyoto University\\ Kyoto 606-8502, Japan}
\email{kito@math.kyoto-u.ac.jp}
\date{\today}
\subjclass[2010]{Primary 14J28 ; Secondary 14C22 ; Tertiary 14D05}
\keywords{$K3$ surface, Good reduction, Formal Brauer group, Picard group}
\author{Kazuhiro Ito}

\begin{document}
\maketitle
\begin{abstract}
 We study non-isotrivial families of $K3$ surfaces in positive characteristic $p$ whose geometric generic fibers satisfy $\rho\geq21-2h$ and $h\geq3$, where $\rho$ is the Picard number and $h$ is the height of the formal Brauer group. We show that, under a mild assumption on the characteristic of the base field, they have potential supersingular reduction. Our methods rely on Maulik's results on moduli spaces of $K3$ surfaces and the construction of sections of powers of Hodge bundles due to van der Geer and Katsura. For large $p$ and each $2\leq{h}\leq10$, using deformation theory and Taelman's methods, we construct non-isotrivial families of $K3$ surfaces satisfying $\rho=22-2h$. 

 \end{abstract}
 
 \section{Introduction}\label{intro}
  We study the variation of heights in non-isotrivial families of $K3$ surfaces in characteristic $p>0$ whose geometric generic fibers have large Picard number. Recall that a \textit{$K3$ surface} $X$ over a field is a projective smooth surface with trivial canonical bundle and  $H^1(X,\O_X)=0$. Let $k$ be an algebraically closed field of positive characteristic $p>0$. For a $K3$ surface $X$ over $k$, let $h(X)$ be the \textit{height} of the formal Brauer group. (When $k$ is not algebraically closed, the height of $X$ is defined to be the height of $X_{\overline{k}}:=X\otimes_{k}{\overline{k}}$.) We have $1\leq{h(X)}\leq10$ or $h(X)=\infty.$ When $h(X)\neq\infty$, the Artin-Mazur-Igusa inequality $$\rho(X)\leq22-2h(X)$$ is satisfied \cite[Theorem 0.1]{Artin74}, where $\rho(X)$ is the Picard number of $X$. A $K3$ surface $X$ over $k$ is called \textit{supersingular} if $h(X)=\infty.$ The Tate conjecture for $K3$ surfaces \cite{Charles13}, \cite{Kim-Madapusi}, \cite{Madapusi}, \cite{Maulik}, \cite{Nygaard}, \cite{Nygaard2} implies that $X$ is supersingular if and only if $\rho(X)=22$. (See also \cite[Corollaire 0.5]{Benoist}, \cite[Corollary 17.3.7]{Huybrechts}.) 
  \par
   Let $C$ be a proper smooth curve over $k$ with function field $K:=k(C)$. Let $X$ be a $K3$ surface over $K$. Let $v\in{C}$ be a closed point where $X$ has potential good reduction. We say the \textit{height of $X$ jumps at $v$} (resp.\ $X$ has \textit{potential supersingular reduction} at $v$) if there exist a finite extension $L/K$ and a valuation $w$ of $L$ extending the valuation of $v$, and a smooth family of $K3$ surfaces $\mathscr{X}$ over the valuation ring $\mathscr{O}_w$ such that $\mathscr{X}\otimes_{\mathscr{O}_w}{L}\simeq{X\otimes_{K}{L}}$, and $h(\mathscr{X}_s)>h(X)$ (resp.\ $h(\mathscr{X}_s)=\infty$). Here, $\mathscr{X}_s$ is the special fiber of $\mathscr{X}$. We say $X$ is \textit{non-isotrivial} if there does not exist a $K3$ surface $Y$ over $k$ such that $X\otimes_{K}{\overline{K}}\simeq{Y\otimes_{k}{\overline{K}}}$.  \par
      
     The first main result of this paper is as follows.
 \begin{thm}\label{main}
 Let $k$ be an algebraically closed field of characteristic $p>0$, $C$ a projective smooth curve with function field $K:=k(C)$. Let $X$ be a non-isotrivial $K3$ surface over $K$ that admits an ample line bundle $\mathscr{L}$ of degree $2d$. Assume that $p>18d+4$ and $3\leq{h(X)}\leq10$. Then the height of $X$ jumps at some closed point $v\in{C}$.
\end{thm}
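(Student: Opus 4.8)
The plan is to pass to a finite cover of $C$ on which $X$ acquires good reduction everywhere, and then to locate the jump by pulling back a van der Geer--Katsura section of a suitable power of the Hodge bundle and counting its zeros. The only non‑formal ingredient is the first step.

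\emph{Potential good reduction everywhere.} First I would replace $C$ by a finite cover $C'$, with function field $K':=k(C')$, chosen so that $X_{K'}$ extends to a smooth family $f\colon\mathscr{X}\to C'$ of $K3$ surfaces carrying a relatively ample line bundle of degree $2d$. Producing such a cover is, I expect, the main obstacle, and it is the only place where the hypotheses $p>18d+4$ and $h(X)\geq 3$ are used. The idea: by the semistable reduction theorem the Kuga--Satake abelian variety $A$ of $X$ acquires semi‑abelian reduction at every place of $K$ after a finite base change; a nonzero torus part in the reduction of $A$ at a place $v$ would make the degeneration of $X$ at $v$ of Kulikov type II or III, and the geometric generic fibre of such a degeneration necessarily has height $\leq 2$. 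Since $h(X)\geq 3$, the reduction of $A$ is abelian everywhere, so $A_{K'}$ extends to an abelian scheme over $C'$; by the comparison between good reduction of a $K3$ surface and of its Kuga--Satake abelian variety (available since $p>18d+4$, by results of Madapusi Pera and Matsumoto) $X$ then has good reduction at every place, and Maulik's results on the moduli stack $M_{2d}$ of polarized $K3$ surfaces of degree $2d$ over $k$ (again using $p>18d+4$) provide, after possibly enlarging $C'$, the desired smooth polarized family $\mathscr{X}\to C'$.

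\emph{Set-up.} Write $h_0:=h(X)$, so $3\leq h_0\leq 10$. The height function $c\mapsto h(\mathscr{X}_c)$ on $C'$ is upper semicontinuous and its value at the generic point is $h(X_{\overline{K}})=h_0$; hence the closed locus $\{c\in C':h(\mathscr{X}_c)\geq h_0\}$, which contains the generic point, is all of $C'$: every fibre of $\mathscr{X}$ has height $\geq h_0$. Put $\lambda:=f_*\omega_{\mathscr{X}/C'}$, the Hodge line bundle of the family; it is the pullback of the Hodge bundle on $M_{2d}$, which is ample on the minimal compactification of $M_{2d}$ (Maulik). Because $X$ is non-isotrivial the family $\mathscr{X}\to C'$ is non-isotrivial, so the classifying morphism $C'\to M_{2d}$ is non-constant; pulling back an ample bundle along a non-constant morphism from a proper curve, we get $\deg_{C'}\lambda>0$.

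\emph{Finding the jump.} Since every fibre of $\mathscr{X}$ has height $\geq h_0$, the construction of van der Geer and Katsura gives a section
\[
s\in H^0\!\bigl(C',\lambda^{\otimes N}\bigr),\qquad N=N(p,h_0)\geq 1,
\]
whose zero scheme is exactly $\{c\in C':h(\mathscr{X}_c)\geq h_0+1\}$ (when $h_0=10$, read ``$h\geq 11$'' as ``supersingular''). The section $s$ is not identically zero, since $h(\mathscr{X}_\eta)=h_0<h_0+1$ at the generic point $\eta$. As $C'$ is a proper curve and $\deg_{C'}\lambda^{\otimes N}=N\deg_{C'}\lambda>0$, the section $s$ must vanish at some closed point $v'\in C'$; at such a point $h(\mathscr{X}_{v'})\geq h_0+1>h_0=h(X)$. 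Restricting $\mathscr{X}$ to the local ring of $C'$ at $v'$ --- the valuation ring of a place of $K'$ lying over the image $v\in C$ of $v'$ --- then exhibits, over the finite extension $K'/K$, a smooth model of $X$ whose special fibre has height strictly larger than $h(X)$. Hence the height of $X$ jumps at $v$, as desired.
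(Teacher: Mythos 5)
Your second half --- every fibre has height $\geq h_0$, pull back the van der Geer--Katsura section of $\lambda^{\otimes(p^{h_0}-1)}$, and use positivity of $\deg\lambda$ to force a zero --- is exactly the paper's Corollary \ref{jump}, and that part is fine. The problems are in your reduction to a smooth family over a proper cover, and there are two of them.

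First, the Kuga--Satake detour does not work as stated. The assertion that ``a nonzero torus part in the reduction of $A$ at $v$ would make the degeneration of $X$ at $v$ of Kulikov type II or III'' presupposes that a Kulikov (semistable combinatorial) degeneration of $X$ at $v$ already exists; and the good-reduction criterion of Madapusi Pera--Matsumoto--Liedtke--Matsumoto that you invoke to pass back from $A$ to $X$ itself rests on that same existence (this is precisely Assumption~($\star$) of Liedtke--Matsumoto). So the Kuga--Satake wrapper is circular as a way of avoiding model construction. What is actually needed, and what the paper does, is direct: use Saint-Donat (very ampleness of $\mathscr{L}^{\otimes 3}$) and Saito's theorem on projective semistable models --- this is where $p>18d+4$ enters --- to build, after a finite extension at each bad place, a model whose special fibre is a combinatorial $K3$ surface (Nakkajima); then Rudakov--Zink--Shafarevich show a non-smooth combinatorial fibre forces generic height $\leq 2$, so $h(X)\geq 3$ rules out types II and III; finally one must still apply Artin's simultaneous resolution of the rational double points in the type~I fibre, after a further base change, and the resulting smooth model is in general only an algebraic space, not a scheme. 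Your write-up skips the resolution step entirely.

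Second, the claim that the extended family carries a \emph{relatively ample} line bundle of degree $2d$ is false in general: after simultaneous resolution the extension of $\mathscr{L}$ meets the exceptional curves trivially, so on the special fibres it is only big and nef. One therefore only obtains a primitively \emph{quasi-polarized} family (primitivity on the special fibres itself requires the Maulik--Poonen specialization argument and $p>2d$), and the classifying map lands in the quasi-polarized stack $M_{2d,k}$, on which $\lambda$ is not known to be ample. The degree computation is rescued by the refined form of Maulik's positivity theorem (Theorem \ref{positivity}): the pullback of $\lambda$ to a proper curve has positive degree provided the curve is not contracted in the coarse space (non-isotriviality) and meets the polarized locus (the generic fibre is polarized). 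With these two repairs your argument becomes the paper's proof; as written, the reduction step has a genuine gap.
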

 
As a direct consequence of Theorem \ref{main} and the Artin-Mazur-Igusa inequality $\rho\leq 22-2h$, we have the following corollary.
 \begin{cor}\label{supersingular-reduction}
 Under the assumptions of Theorem \ref{main}, assume moreover that $\rho(X_{\overline{K}})\geq21-2h(X)$. Then $X$ has potential supersingular reduction at some closed point $v\in{C}$. \end{cor}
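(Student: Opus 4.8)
The plan is to deduce the corollary from Theorem~\ref{main} together with two standard inputs: the Artin--Mazur--Igusa inequality $\rho\leq 22-2h$ recalled in the introduction, and the injectivity of the specialization map on Picard groups of a smooth proper family over a discrete valuation ring.

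First I would apply Theorem~\ref{main} to obtain a closed point $v\in C$ at which the height of $X$ jumps. Unwinding the definition of ``jumping height'', this provides a finite extension $L/K$, a valuation $w$ of $L$ extending that of $v$, and a smooth family of $K3$ surfaces $\mathscr{X}\to\Spec\mathscr{O}_w$ with $\mathscr{X}\otimes_{\mathscr{O}_w}{L}\simeq X\otimes_{K}{L}$ and $h(\mathscr{X}_s)>h(X)$, where $\mathscr{X}_s$ is the special fiber. Since $k$ is algebraically closed and $v$ is a closed point of the $k$-curve $C$, the residue field of $w$ is $k$, so $\mathscr{X}_s$ is a $K3$ surface over $k$; also, as $L/K$ is finite, $\overline{L}=\overline{K}$ and hence $(X\otimes_{K}{L})\otimes_{L}{\overline{L}}=X_{\overline{K}}$.

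The core step is to show that $\mathscr{X}_s$ is supersingular. Because $\mathscr{X}$ is smooth and proper over $\mathscr{O}_w$, the specialization homomorphism $\Pic(X_{\overline{K}})\to\Pic(\mathscr{X}_s)$ is injective (this follows from the compatibility of cycle class maps with smooth proper base change in $\ell$-adic cohomology, $\ell\neq p$), hence
\[
\rho(\mathscr{X}_s)\ \geq\ \rho(X_{\overline{K}})\ \geq\ 21-2h(X).
\]
Now suppose, for contradiction, that $h(\mathscr{X}_s)\neq\infty$. Then the Artin--Mazur--Igusa inequality applies to $\mathscr{X}_s$, giving $\rho(\mathscr{X}_s)\leq 22-2h(\mathscr{X}_s)$; combined with the previous inequality this yields $21-2h(X)\leq 22-2h(\mathscr{X}_s)$, so $h(\mathscr{X}_s)\leq h(X)+\tfrac12$, and since heights are integers this forces $h(\mathscr{X}_s)\leq h(X)$, contradicting $h(\mathscr{X}_s)>h(X)$. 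Therefore $h(\mathscr{X}_s)=\infty$, i.e.\ $\mathscr{X}_s$ is supersingular.

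Since $\mathscr{X}$ is thus a smooth model of $X\otimes_{K}{L}$ over $\mathscr{O}_w$ with supersingular special fiber, $X$ has potential supersingular reduction at $v$, which is the assertion. There is no serious obstacle here: the argument is essentially a counting of Picard ranks, and the only point deserving care is to cite correctly the injectivity of the specialization map on Picard groups of $K3$ surfaces (equivalently, on their N\'eron--Severi lattices). The hypotheses $3\leq h(X)\leq 10$ and $p>18d+4$ enter solely through the application of Theorem~\ref{main}.
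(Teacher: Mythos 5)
Your proposal is correct and follows essentially the same route as the paper: apply Theorem \ref{main} to find a jump point, use injectivity of specialization on Picard groups to get $\rho(\mathscr{X}_s)\geq\rho(X_{\overline{K}})$, and derive the contradiction $h(\mathscr{X}_s)\leq h(X)+\tfrac12$ from the Artin--Mazur--Igusa inequality. The paper's version merely leaves the specialization step implicit, which you spell out.
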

 The following theorem is the second main result of this paper. We shall prove the existence of non-isotrivial $K3$ surfaces over function fields with $\rho=22-2h$ if $h \geq2$.
\begin{thm}\label{main2}
Let $p$ be a prime number and $h$ a positive integer with $2\leq{h}\leq10$. There exist non-isotrivial $K3$ surfaces $X$ over function fields of characteristic $p$ satisfying $\rho(X)=22-2h(X)$ and $h(X)=h$ if at least one of the following conditions holds:
\begin{itemize}
\item $p=3$ and $h=10$, or
\item $p\geq5$.
\end{itemize}
\end{thm}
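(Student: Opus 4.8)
\medskip

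The plan is to realize $X$ as the generic fiber of a one‑parameter family of $K3$ surfaces over $\overline{\F}_p$, obtained by deforming a single ``seed'' $K3$ surface of height exactly $h$ inside a moduli space, while keeping fixed a rank‑$(22-2h)$ sublattice of the Néron–Severi lattice and keeping the height at least $h$. The point is that the statement is interesting only over a non‑closed field: over $\overline{\F}_p$ the Artin–Mazur–Igusa inequality is in fact an equality, since by the Tate conjecture $\rho(Y)$ equals the multiplicity of the slope $1$ in $H^2_{\mathrm{crys}}(Y)$, and this multiplicity is $22-2h(Y)$ when $h(Y)<\infty$, by the structure of the Newton polygon of a $K3$ surface, which is determined by the height of the formal Brauer group. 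In particular every $K3$ surface over $\overline{\F}_p$ of finite height $h'$ has Néron–Severi lattice of rank $22-2h'$; this will be used both to produce the seed surface and to identify, in the deformation, the locus of surfaces of height exactly $h$. For the generic fiber, the geometric Picard number will then be squeezed between $22-2h$ (from below, by the fixed sublattice) and the Picard number of a well‑chosen closed fiber (from above, by specialization of cycle classes), while the height is controlled from above by Artin–Mazur–Igusa and from below by the Newton‑polygon stratum the family lies in.

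First I would, using Taelman's methods for constructing $K3$ surfaces over finite fields with prescribed crystalline and $\ell$‑adic invariants (via abelian varieties of Kuga–Satake type), produce a $K3$ surface $Y$ over $\overline{\F}_p$ with $h(Y)=h$ whose Néron–Severi lattice is a suitable ``$p$‑good'' lattice $\Lambda$ of rank $22-2h$ and signature $(1,21-2h)$; for $(p,h)=(3,10)$ one instead takes $\Lambda$ of rank $2$ and $Y$ an explicit elliptic $K3$ surface, the height of whose formal Brauer group is computed by van der Geer and Katsura. Fix an ample class in $\Lambda$, of degree $2d$ say. By Maulik's results on the moduli of polarized $K3$ surfaces in characteristic $p$ (valid under the hypotheses on $p$), together with the unobstructedness of deformations of $K3$ surfaces, the Noether–Lefschetz locus $M_\Lambda\subseteq\M2d$ parametrizing $K3$ surfaces whose Picard lattice contains a copy of $\Lambda$ compatible with the polarization is smooth of dimension $20-\mathrm{rk}(\Lambda)=2h-2\geq 2$ and contains the point $[Y]$; after passing to a rigidifying finite étale cover I may assume there is a universal family $\mathscr{X}\to M_\Lambda$, each of whose geometric fibers carries a primitive embedding $\Lambda\hookrightarrow\Pic$.

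Next, by the construction of sections of powers of the Hodge bundle due to van der Geer and Katsura, the locus $M_\Lambda^{\geq h}\subseteq M_\Lambda$ where the height is $\geq h$ is cut out by $h-1$ such sections, hence is closed of codimension $\leq h-1$, so every irreducible component of it has dimension $\geq (2h-2)-(h-1)=h-1\geq 1$. Since $h(Y)=h$, we have $[Y]\in M_\Lambda^{\geq h}$; let $Z$ be an irreducible component through $[Y]$, so $\dim Z\geq 1$ and $Z$ is not contained in the (closed) supersingular locus. I would then take an irreducible curve $C_0\subseteq Z$ passing through $[Y]$ (cutting $Z$ by general hyperplane sections through $[Y]$ if $\dim Z\geq 2$), let $C$ be its smooth projective model, $K:=\overline{\F}_p(C)$, and let $X$ be the generic fiber of $\mathscr{X}|_{C_0}$, a $K3$ surface over $K$. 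Then $X$ is non‑isotrivial, for otherwise $\mathscr{X}|_{C_0}$ would become constant after a finite base change, giving a constant classifying map, contrary to $C_0\hookrightarrow M_\Lambda$ being non‑constant. Its geometric Picard number is $22-2h$: on one hand $\rho(X_{\overline{K}})\leq\rho(Y)=22-2h$, because the specialization map $\mathrm{NS}(X_{\overline{K}})\otimes\Q\hookrightarrow\mathrm{NS}(Y)\otimes\Q$ is injective (the specialization map on $\ell$‑adic $H^2$ is an isomorphism for the smooth proper family $\mathscr{X}$ and is compatible with first Chern classes, $Y$ being a closed fiber), and on the other hand $\rho(X_{\overline{K}})\geq 22-2h$ because $\Lambda\hookrightarrow\mathrm{NS}(X_{\overline{K}})$. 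Finally $h(X)=h$: since $\rho(X_{\overline{K}})=22-2h<22$ the surface $X_{\overline{K}}$ is not supersingular and the Artin–Mazur–Igusa inequality gives $h(X)\leq h$, while $C_0\subseteq M_\Lambda^{\geq h}$ forces the Newton polygon of $X$ at the generic point of $C_0$ to lie on or above that of height $h$, i.e.\ $h(X)\geq h$. Hence (possibly after a finite extension of $K$) $\rho(X)=22-2h=22-2h(X)$ and $h(X)=h$.

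The main obstacle is the first step: producing the seed surface $Y$ over $\overline{\F}_p$ of height exactly $h$ with a sufficiently well‑behaved Néron–Severi lattice. A dimension count in the moduli space does not by itself rule out that $M_\Lambda^{\geq h}$ is entirely contained in the supersingular locus, so one genuinely needs an arithmetic construction of a single $K3$ surface of height exactly $h$ over $\overline{\F}_p$; this is precisely where Taelman's techniques enter, and they are the origin both of the hypothesis $p\geq 5$ and of the exceptional case $(p,h)=(3,10)$, where the rank‑$2$ polarization lattice permits substituting explicit elliptic $K3$ surfaces. A secondary point, also constrained by the lower bound on $p$, is the verification via Maulik's work that $M_\Lambda$ is smooth of the expected dimension $2h-2$ in characteristic $p$; the remaining ingredients are standard specialization and deformation theory.
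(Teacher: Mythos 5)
Your proposal is correct and takes essentially the same route as the paper: first produce a seed $K3$ surface over $\overline{\F}_p$ with $\rho=22-2h$ and height exactly $h$ via Taelman-type existence results (the paper invokes its Theorem \ref{existenceprop} for $p\geq5$ and the explicit Shioda/van der Geer--Katsura construction plus evenness of $\rho$ for $(p,h)=(3,10)$), and then deform it in a positive-dimensional family that preserves the full Picard lattice and stays in the height-$\geq h$ stratum cut out by the $h-1$ van der Geer--Katsura sections, with exactly your dimension count $20-(22-2h)-(h-1)=h-1\geq1$ and the same squeeze ($\Lambda\hookrightarrow\Pic$ from below, Artin--Mazur--Igusa/specialization from above) identifying $\rho$ and $h$ on the generic fiber. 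The only cosmetic difference is that the paper works on the formal versal deformation space $\Spf{k[[t_1,\dotsc,t_{20}]]}$ (algebraized via an ample class) rather than on a Noether--Lefschetz locus in the global moduli stack; note also that your assertion that $M_\Lambda$ is smooth of dimension exactly $2h-2$ is both delicate in characteristic $p$ (excess dimension along the supersingular locus) and unnecessary --- only the lower bound $\dim_{[Y]}M_\Lambda\geq 20-\mathrm{rk}(\Lambda)$ is needed, just as in the paper.
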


\begin{rem}
When $p\geq 3$, Jang and Liedtke independently proved that there do not exist non-isotrivial $K3$ surfaces $X$ with $\rho(X)=22-2h(X)$ and $h(X)=1$ \cite[Theorem 3.7]{Jang}, \cite[Theorem 2.6]{Liedtke}. See also Theorem \ref{20}.
\end{rem}

 \begin{rem}
\rm{The main reason why we assume conditions on prime $p$ in Theorem \ref{main} and Theorem \ref{main2} is that we do not currently know the existence of potential semistable reduction of $K3$ surfaces over discretely valued fields. For the precise statement we need, see \cite[Assumption ($\star$)]{Liedtke-Matsumoto}. If we assume \cite[Assumption ($\star$)]{Liedtke-Matsumoto}, we can show Theorem \ref{main} for $p\geq 3$ and Theorem \ref{main2} for any $p$. See also Remark \ref{maincond}.}
\end{rem} 
 
The outline of this paper is as follows. We recall the results of van der Geer-Katsura \cite{van-katsura} and Maulik \cite{Maulik} on the variation of heights in families of $K3$ surfaces in Section \ref{height}. The proof of Theorem \ref{main} is given in Section \ref{proof}. In Section \ref{deformation}, using deformation theory, we show Theorem \ref{main2} comes down to show the existence of $K3$ surfaces $X$ over $\overline{\F}_p$ satisfying $\rho(X)=22-2h(X)$ and $h(X)=h$. Assuming semistable reduction, Taelman conditionally proved the existence of $K3$ surfaces over finite fields with given $L$-function, up to finite extensions of the base field; see \cite{Taelman}. When the characteristic of the base field is not too small, the author recently proved that Taelman's results hold unconditionally; see \cite{Ito}. Using these results, in Section \ref{existence}, we construct $K3$ surfaces $X$ over $\overline{\F}_p$ satisfying $\rho(X)=22-2h(X)$ and $h(X)=h$ for large $p$ and each $2\leq{h}\leq10$; see Proposition \ref{2,5}. Then we achieve Theorem \ref{main2}.

\section{The variation of heights in families of $K3$ surfaces}\label{height}
Let $X$ be a $K3$ surface over an algebraically closed field $k$ of characteristic $p>0$. The following functor from the category $\mathrm{Art}_k$ of local artinian $k$-algebras with residue field $k$ to the category of abelian groups
$$ 
\begin{CD}
 {\Phi^2_X}\colon    &{\mathrm{Art}_k}   @>>>  &{\text{(Abelian  groups)}}\\
                         & {R}      @>>>  &\ker({H^2_{\rm{\acute{e}t}}(X\otimes_k{R},\mathbb{G}_m)}\rightarrow{H^2_{\rm{\acute{e}t}}(X,\mathbb{G}_m)})\\
\end{CD}
$$
is pro-representable by a smooth one-dimensional formal group scheme $\hatBr(X)$ \cite{Artin-Mazur}. The \textit{height} of $X$ is defined to be the height of $\hatBr(X)$
$$h(X):=h({\hatBr(X)}).$$ 
We have $1\leq{h(X)}\leq10$ or $h(X)=\infty$. When ${h(X)}=\infty$, we say $X$ is \textit{supersingular}. When $h(X)\neq\infty$, Artin proved the following inequality
  $$\rho(X)\leq22-2h(X),$$ where $\rho(X)$ is the Picard number of $X$ \cite[Theorem 0.1]{Artin74}. The Tate conjecture for $K3$ surfaces \cite{Charles13}, \cite{Kim-Madapusi}, \cite{Madapusi}, \cite{Maulik}, \cite{Nygaard}, \cite{Nygaard2} implies that $X$ is supersingular if and only if $\rho(X)=22$. (See also \cite[Corollaire 0.5]{Benoist}, \cite[Corollary 17.3.7]{Huybrechts}.) When $k$ is not algebraically closed, the height of $X$ is defined to be the height of $X\otimes_{k}{\overline{k}}$.\par
  We say $f\colon \mathscr{X} \to S$ is a family of $K3$ surfaces if $S$ is a scheme, $\mathscr{X}$ is an algebraic space, and $f$ is a proper smooth morphism whose geometric fibers are $K3$ surfaces. A \textit{polarization} (resp.\ \textit{quasi-polarization}) of $f\colon \mathscr{X} \to S$ is a section ${\xi}\in\underline{\Pic}(\mathscr{X}/S)(S)$ of the relative Picard functor whose fiber $\xi(s)$ at every geometric point $s \to {S}$ is a polarization (resp.\ quasi-polarization), which means an ample (resp.\ big and nef) line bundle on the $K3$ surface $\mathscr{X}_s$. We say a section ${\xi}\in\underline{\Pic}(\mathscr{X}/S)(S)$ is \textit{primitive} if, for every geometric point $s\to{S}$, the cokernel of the inclusion $\langle\xi(s)\rangle\hookrightarrow\Pic(\mathscr{X}_s)$ is torsion free. For an integral scheme $S$ over $k$, we say a family of $K3$ surfaces $f\colon \mathscr{X}\to{S}$ is \textit{non-isotrivial} if there do not exist a $K3$ surface $Y$ over $k$ and a finite flat morphism $S' \to S$ such that $\mathscr{X}\times_{S}{S'}\simeq{Y\times_{\Spec{k}}{S'}}$.\par

  For a positive integer $d\geq1$, let $M_{2d}$ (resp.\ ${M^{\circ}_{2d}}$) be the moduli functor that sends a $\Z$-scheme $S$ to the groupoid consists of tuples $(f\colon \mathscr{X} \to S, \xi)$, where $f\colon \mathscr{X} \to S$  is a family of $K3$ surfaces and $\xi\in\underline{\Pic}(\mathscr{X}/S)(S)$ is a primitive quasi-polarization (resp.\ primitive polarization) of degree $2d$ (i.e.\ for every geometric point $s \to {S}$, $({\xi(s)})^2=({\xi(s)},{\xi(s)})=2d$, where $(\ ,\ )$ denotes the intersection pairing on $\mathscr{X}_s$).\par
     The following theorem is well-known.
  \begin{thm}[{\cite[Theorem 4.3.4]{Rizov06}, \cite[Proposition 2.1]{Maulik}}]
  The moduli functors $M_{2d}$ and ${M^{\circ}_{2d}}$ are Deligne-Mumford stacks of finite type over $\Spec{\Z}$, and smooth over $\Spec{\Z}[1/2d]$.
  \end{thm}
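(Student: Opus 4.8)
The plan is to present both stacks as global quotients of locally closed subschemes of Hilbert schemes by a projective linear group, read off algebraicity and finite-typeness over $\Z$ from this presentation, deduce the Deligne--Mumford property from the vanishing $H^0(X,T_X)=0$, and finally establish smoothness over $\Z[1/2d]$ by computing the deformation theory of the pair $(X,\xi)$. First I would treat the polarized functor $M^{\circ}_{2d}$. The geometric input is the theory of linear systems on K3 surfaces (Saint-Donat, Mayer), valid in all characteristics: for a K3 surface $X$ and an ample line bundle $L$ with $L^2=2d$, the bundle $L^{\otimes 3}$ is very ample and $H^{i}(X,L^{\otimes 3m})=0$ for all $i>0$ and $m\geq 1$; by Riemann--Roch $h^{0}(X,L^{\otimes 3m})=9dm^{2}+2$, so $L^{\otimes 3}$ embeds $X$ into $\mathbb{P}^{9d+1}$ with fixed Hilbert polynomial $P(m)=9dm^{2}+2$. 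Inside $\mathcal{H}=\mathrm{Hilb}_{P}(\mathbb{P}^{9d+1}_{\Z})$, which is projective over $\Z$, the locus $\mathcal{H}^{\circ}$ of subschemes that are K3 surfaces $X$ with $\O_{X}(1)\cong L^{\otimes 3}$ for a (necessarily unique, since $\Pic X$ is torsion free) primitive $L$ of degree $2d$ is locally closed: being a K3 surface is an open condition on the base of a flat proper family, and the required $3$-divisibility and primitivity in the relative N\'eron--Severi group then cut out a locally closed subscheme. Thus $M^{\circ}_{2d}\cong[\mathcal{H}^{\circ}/\mathrm{PGL}_{9d+2}]$, an algebraic stack of finite type over $\Z$. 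For the quasi-polarized functor $M_{2d}$ one argues similarly after replacing $X$ by its ample model: since $\xi$ is big and nef, $\xi^{\otimes 3}$ is globally generated and its associated morphism contracts the $(-2)$-curves orthogonal to $\xi$, realizing the ample model $\bar{X}\subseteq\mathbb{P}^{9d+1}$ as a K3 surface with rational double points carrying an ample $\bar{\xi}$ of degree $2d$; one then shows $M_{2d}$ is algebraic and of finite type over $\Z$, either by a quotient-stack description of the pairs $(X,\xi)$ together with their ample models, or by checking Artin's representability criteria (the nontrivial inputs --- effectivity of formal deformations and quasi-compactness --- being supplied by Grothendieck existence, the Hilbert-scheme bound above, and the deformation theory below). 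Since ampleness of a line bundle is an open condition on the base of a proper flat family, $M^{\circ}_{2d}\hookrightarrow M_{2d}$ is an open immersion, so it suffices to treat $M_{2d}$.

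Next, the Deligne--Mumford property. The diagonal of $M_{2d}$ is representable, separated and of finite type, since the $\mathrm{Isom}$-scheme of two families of (quasi-)polarized K3 surfaces is a closed subscheme of a relative Hilbert scheme of graphs. It is moreover unramified: the automorphism group scheme of a (quasi-)polarized K3 surface $(X,\xi)$ over a field has Lie algebra contained in $H^{0}(X,T_{X})$, and $H^{0}(X,T_{X})=0$ because $T_{X}\cong\Omega^{1}_{X}$ (as $\omega_{X}\cong\O_{X}$) and $H^{0}(X,\Omega^{1}_{X})=0$ for a K3 surface in every characteristic. An algebraic stack of finite type whose diagonal is unramified is a Deligne--Mumford stack, so $M_{2d}$ --- and its open substack $M^{\circ}_{2d}$ --- is a Deligne--Mumford stack of finite type over $\Z$.

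Finally, smoothness over $\Z[1/2d]$, which concerns the fibers over $\Q$ and over primes $p\nmid 2d$. Let $(X,\xi)$ be a (quasi-)polarized K3 surface over a field $k$ with $\mathrm{char}\,k=0$ or $\mathrm{char}\,k=p\nmid 2d$. Deformations of $X$ alone are unobstructed, since obstructions lie in $H^{2}(X,T_{X})$ and, by Serre duality, $H^{2}(X,T_{X})\cong H^{0}(X,T_{X}^{\vee}\otimes\omega_{X})^{\vee}=H^{0}(X,\Omega^{1}_{X})^{\vee}=0$; hence $\mathrm{Def}(X)$ is formally smooth of dimension $h^{1}(X,T_{X})=20$. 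The subfunctor along which $\xi$ extends is governed by the obstruction sending a first-order deformation $v\in H^{1}(X,T_{X})$ to $\langle v,c_{1}(\xi)\rangle\in H^{2}(X,\O_{X})$, the pairing being induced by the contraction $T_{X}\otimes\Omega^{1}_{X}\to\O_{X}$; by Serre duality this pairing $H^{1}(X,T_{X})\times H^{1}(X,\Omega^{1}_{X})\to H^{2}(X,\O_{X})\cong k$ is perfect. Now $c_{1}(\xi)\neq 0$ in $H^{1}(X,\Omega^{1}_{X})$, because its self-cup-product equals $(\xi\cdot\xi)=2d$ in $H^{2}(X,\Omega^{2}_{X})\cong k$, which is nonzero as $p\nmid 2d$. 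Therefore the obstruction map is surjective and the deformation functor of the pair $(X,\xi)$ is formally smooth of dimension $19$; since $M_{2d}$ has a smooth presentation and its deformation theory is everywhere formally smooth over $\Z[1/2d]$, it (and $M^{\circ}_{2d}$) is smooth over $\Z[1/2d]$, of relative dimension $19$.

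I expect the main obstacle to be the quasi-polarized case: making the passage to the ample model work in families and over $\Z$ is delicate, because the minimal resolution of a family of K3 surfaces with rational double points need not commute with base change, so $M_{2d}$ cannot simply be identified with a moduli space of rational-double-point K3 surfaces. The characteristic-$p$ linear-systems statements (global generation of $\xi^{\otimes 3}$, very-ampleness of $L^{\otimes 3}$, and the cohomology vanishing) underlying both the boundedness and the Hilbert-scheme presentation, together with the inputs $H^{0}(X,\Omega^{1}_{X})=0$ and the nonvanishing of $c_{1}(\xi)$ in $H^{1}(X,\Omega^{1}_{X})$ (which rest on the classification of K3 surfaces and on crystalline Chern class theory), are the other technical points, though all of these are by now standard.
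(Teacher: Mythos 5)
The paper offers no proof of this statement: it is quoted as known, with pointers to Rizov (Theorem 4.3.4) and Maulik (Proposition 2.1), so there is no internal argument to compare yours against. Your sketch reconstructs essentially the route taken in those references --- a Hilbert-scheme/quotient-stack presentation via very ampleness of $L^{\otimes 3}$, unramifiedness of the diagonal from the absence of global vector fields, and smoothness over $\Z[1/2d]$ from the perfectness of the pairing $H^1(X,T_X)\times H^1(X,\Omega^1_X)\to H^2(X,\O_X)$ together with $c_1(\xi)^2=2d\neq 0$ in $k$ --- and it is correct as a sketch.

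Two points deserve more care than you give them. First, $H^0(X,\Omega^1_X)=0$, equivalently $H^0(X,T_X)=0$, in characteristic $p$ is the Rudakov--Shafarevich theorem on vector fields on K3 surfaces; it is a genuinely hard input and not a formal consequence of $\omega_X\cong\O_X$ and the definition of a K3 surface. You flag it in your closing paragraph, but in the body of the argument it is asserted as if routine. Second, smoothness over $\Z[1/2d]$ requires formal smoothness of the deformation functor of $(X,\xi)$ over Artinian local $\Z[1/2d]$-algebras, including mixed-characteristic square-zero extensions (this is Deligne's lifting theorem for polarized K3 surfaces); your obstruction computation is phrased over the residue field, although the identical calculus does apply to arbitrary small extensions once one knows the relevant cohomology groups are compatible with base change. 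Finally, the quasi-polarized case is, as you correctly identify, where the real work lies (algebraicity of $M_{2d}$ cannot be reduced naively to a Hilbert-scheme quotient because the contraction to the ample model does not interact well with families), and this is precisely the content supplied by the cited references rather than by the present paper.
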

  In the following, we fix an algebraically closed field $k$ of characteristic $p>0$ and a positive integer $d\geq1$ such that $p$ does not divide $2d$. We work with the moduli stacks $M_{2d, k}:={M_{2d}}\otimes_{\Z}{k}$ and $M^{\circ}_{2d, k}:={M^{\circ}_{2d}}\otimes_{\Z}{k}$. Let $\pi\colon \mathscr{X} \to M_{2d, k}$ be the universal family. \par
  Now we recall the results of van der Geer and Katsura on the description of the locus 
 $${M^{(h)}_{2d, k}}=\{\, s \in{M_{2d, k}} \mid {h(\mathscr{X}_{s})\geq{h}}\, \}$$
  for $1\leq{h}\leq11$. The locus ${M^{(h)}_{2d, k}}$ is a closed substack of $M_{2d, k}$. The locus $M^{(11)}_{2d, k}$ is called the \textit{supersingular locus}. The loci ${M^{(h)}_{2d, k}}$ $(1\leq{h}\leq11)$ form the so-called \textit{height stratification} of the moduli stack $M_{2d, k}$.  
  
  Let $\lambda:=\pi_*({\Omega^2_{\mathscr{X}/{M_{2d, k}}}})$ be the \textit{Hodge bundle} on $M_{2d, k}$. For each $1 \leq h \leq 10$, van der Geer and Katsura constructed an $\O_{{M^{(h)}_{2d, k}}}$-linear morphism of line bundles
 $$\phi_h\colon ({R^2\pi_*{{\O}_{\mathscr{X}}}})^{(p^h)}\vert_{M^{(h)}_{2d, k}} \to {R^2\pi_*{{\O}_{\mathscr{X}}}}\vert_{M^{(h)}_{2d, k}}$$
   on ${M^{(h)}_{2d, k}}$ such that the locus where $\phi_h$ vanishes coincides with ${M^{(h+1)}_{2d, k}}$ \cite[Theorem 15.1]{van-katsura}. Here, $({R^2\pi_*{\O_{\mathscr{X}}}})^{(p^h)}$  is the pullback of ${R^2\pi_*{\O_{\mathscr{X}}}}$ by the $h$-th power of the absolute Frobenius morphism $F\colon M_{2d, k} \to M_{2d, k}$. Therefore, for each $1\leq{h}\leq10$, there is a global section $$g_h\in\Gamma({M^{(h)}_{2d, k}},\lambda^{\otimes{(p^h-1)}})$$ on $M^{(h)}_{2d, k}$ whose zero locus coincides with ${M^{(h+1)}_{2d, k}}$. \par
  When $p \geq5$, Maulik showed the following important theorem in \cite{Maulik}. Madapusi Pera proved it when $p \geq3$ in \cite{Madapusi}.
 \begin{thm}[{\cite[Corollary 5.16]{Madapusi}}, {\cite[Theorem 5.1]{Maulik}}]\label{positivity} Assume $p\geq3$. Then the Hodge bundle $\lambda$ is ample on the polarized locus $M^{\circ}_{2d, k}$. Furthermore, for any morphism $g\colon C \to M_{2d, k}$ from a proper smooth curve $C$ over $k$ such that
 \begin{itemize}
 \item $C$ is not contracted in the map to the coarse moduli space, and
 \item $C$ meets the polarized locus $M^{\circ}_{2d, k}$ nontrivially,
 \end{itemize}
 the pullback of the Hodge bundle $\lambda$ on $M_{2d, k}$ to $C$ has positive degree. 
   \end{thm}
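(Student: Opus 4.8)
The plan is to derive both statements from the Kuga--Satake period morphism together with the theory of arithmetic compactifications of orthogonal Shimura varieties. Fix the lattice $L_d$ of signature $(2,19)$ governing primitively quasi-polarized $K3$ surfaces of degree $2d$, and let $\mathsf{Sh}$ be the integral canonical model over $\Z_{(p)}$ (which exists for $p\nmid 2d$ and $p\geq 3$, by Madapusi Pera, refining the case $p\geq 5$) of the associated $\mathrm{GSpin}$ Shimura variety, with its tautological weight-one automorphic line bundle $\omega$. First I would recall the integral period morphism $\iota\colon M^{\circ}_{2d}\to \mathsf{Sh}$: it is defined over $\Z_{(p)}$, is \'etale onto its image, and satisfies $\iota^{*}\omega\cong\lambda^{\otimes m}$ for a fixed $m>0$; moreover the Kuga--Satake construction embeds $\mathsf{Sh}$ into a Siegel modular variety so that a power of $\omega$ is pulled back from the Siegel Hodge bundle.

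Granting this package, the ``furthermore'' clause reduces to positivity of Hodge bundles of abelian schemes over curves. Given $g\colon C\to M_{2d,k}$ as in the statement, after a connected finite cover $C'\to C$ I would realize a quasi-polarized family $\mathscr{X}_{C'}\to C'$ lifting $g$, apply the Kuga--Satake construction over $C'$, and spread out to a semiabelian scheme $A\to C'$ (abelian over the open locus $U'\subseteq C'$ above the polarized locus, which is nonempty since $C$ meets $M^{\circ}_{2d,k}$). Its Hodge bundle $\omega_A$ satisfies $\deg\omega_A=N\deg g^{*}\lambda$ for a fixed $N>0$, because $\lambda$ occurs with positive multiplicity in the weight-one part of the Hodge filtration on $H^{1}$ of the Kuga--Satake family. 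Now the Hodge bundle of a semiabelian scheme over a smooth proper curve has degree $\geq 0$, with equality precisely when the family becomes isotrivial after a finite cover; so $\deg g^{*}\lambda=0$ would make $A$ isotrivial, and then $\iota$ would contract the image of $U'$ to a point. Since $\iota$ is quasi-finite on the polarized locus and a quasi-polarized $K3$ surface is determined up to finitely many choices by its Kuga--Satake datum, $\mathscr{X}_{C'}$ would be isotrivial over $U'$, contradicting the hypothesis that $C$ is not contracted in the map to the coarse moduli space. Hence $\deg g^{*}\lambda>0$.

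For the first assertion I would pass to the minimal (Baily--Borel-type) compactification $\mathsf{Sh}^{\min}$, a normal projective $\Z_{(p)}$-scheme on which $\omega$ extends to an ample bundle $\omega^{\min}$ --- this is part of the arithmetic compactification package, the ampleness coming from that of the Siegel Hodge bundle on $\mathcal{A}_g^{\min}$ (Faltings--Chai) and finiteness of $\mathsf{Sh}^{\min}\to\mathcal{A}_g^{\min}$ on images. Since $M^{\circ}_{2d,k}$ is of finite type with quasi-projective coarse space and $\iota$ (on $k$-points) extends to a finite morphism from a compactification of that coarse space to $\mathsf{Sh}^{\min}_{k}$, the line bundle $\lambda^{\otimes m}$ is semiample and its numerical behaviour is controlled by $\omega^{\min}$; ampleness then follows from Nakai--Moishezon using the curve estimate already established, every non-contracted proper curve in $M^{\circ}_{2d,k}$ having positive $\lambda$-degree.

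The principal obstacle is the first paragraph: constructing the integral canonical model, the integral Kuga--Satake period morphism, and the arithmetic minimal (and, for the comparison with $\lambda$ along the boundary, toroidal) compactifications in characteristic $p$, and in particular reaching $p=3$ --- this is the hard integral and $p$-adic input of Maulik and Madapusi Pera, and is precisely why $p\geq 3$ is assumed. A secondary difficulty is the passage from the only-\'etale-locally defined Kuga--Satake abelian scheme to a genuine semiabelian scheme over the proper curve $C'$, together with the char-$p$ form of the ``Hodge bundles detect isotriviality'' theorem for abelian schemes and the bookkeeping of the multiplicity $N$; one must also keep the identification $\iota^{*}\omega\cong\lambda^{\otimes m}$ compatible with the chosen compactification, which is where the toroidal theory enters.
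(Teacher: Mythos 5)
The paper does not prove Theorem \ref{positivity} at all: it is imported as a black box from \cite[Theorem 5.1]{Maulik} and \cite[Corollary 5.16]{Madapusi} and is used only in the proof of Corollary \ref{jump}. So there is no internal proof to compare yours against; what you have written is a reconstruction of the argument in the cited references. As such it identifies the correct ingredients: the integral Kuga--Satake period morphism into a $\mathrm{GSpin}$ Shimura variety, the comparison of $\lambda$ with the tautological and Siegel Hodge bundles, nonnegativity of the degree of the Hodge bundle of an abelian scheme over a proper curve with equality characterizing isotriviality, and the arithmetic minimal compactification for ampleness. You also correctly locate the hard content --- integral canonical models and the period map in characteristic $p\geq 3$, which is exactly what Madapusi Pera supplies beyond Maulik's $p\geq 5$ case --- and you are right that this is why the hypothesis $p\geq 3$ appears.

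Two steps in your sketch need tightening. First, since the period morphism is defined on all of $M_{2d, k}$ and not only on the polarized locus, the Kuga--Satake family over a suitable finite cover $C'$ of $C$ is already an abelian scheme over the proper curve $C'$; no semiabelian degeneration along $C'\setminus U'$ occurs, and invoking one only obscures the argument. Second, ``ampleness then follows from Nakai--Moishezon using the curve estimate'' is not a complete step: positive degree on every curve is strict nefness, which does not imply ampleness (Mumford's example). What actually closes the argument is the semiampleness you mention in passing: once $\lambda^{\otimes m}$ is exhibited as the pullback of an ample bundle along a morphism from a compactification of the coarse space to the minimal compactification of the Shimura variety, ampleness is equivalent to that morphism contracting no curve, which is what the curve estimate excludes. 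Constructing that extension, and keeping the identification of $\iota^{*}\omega$ with a power of $\lambda$ compatible with the compactification, is a substantial piece of \cite{Maulik} and should not be treated as routine bookkeeping.
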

   The following corollary is stated in \cite[Theorem 15.3]{van-katsura}, \cite[Corollary 5.5]{Maulik} in the polarized case, but we can prove it in the quasi-polarized (and generically polarized) case in a similar way.
   \begin{cor}\label{jump}
 Assume $p\geq3$. Let $C$ be a proper smooth curve over $k$, and $f\colon \mathscr{X}\rightarrow{C}$ a non-isotrivial family of $K3$ surfaces with a primitive quasi-polarization $\mathscr{\xi}$ of degree $2d$ such that the generic fiber $(\mathscr{X}_{\eta}, \xi({\eta}))$ is a polarized $K3$ surface. Then, either
 \begin{itemize}
\item the heights of the geometric fibers $\mathscr{X}_t$ are not constant, or
\item all geometric fibers $\mathscr{X}_t$ are supersingular.
\end{itemize}
\end{cor}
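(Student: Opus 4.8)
The plan is to reinterpret the family via the moduli stack $M_{2d,k}$ and then play the van der Geer--Katsura sections $g_h$ off against the positivity of the Hodge bundle in Theorem \ref{positivity}. First I would note that, by definition of the moduli functor, the pair $(f\colon\mathscr{X}\to C,\xi)$ is classified by a morphism $g\colon C\to M_{2d,k}$; and since $\xi(\eta)$ is a genuine (ample) polarization, the generic point $\eta$ of $C$ maps into the open substack $M^{\circ}_{2d,k}$, so $C$ meets the polarized locus nontrivially.

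Next I would verify that non-isotriviality of $f$ forces $C$ \emph{not} to be contracted in the map to the coarse moduli space. Suppose, for contradiction, that the composite of $g$ with the projection to the coarse space were constant, say equal to the point corresponding to a polarized $K3$ surface $(Y,\xi_Y)$ over $k$. Then every geometric fiber of $(\mathscr{X},\xi)$ is isomorphic to $(Y,\xi_Y)$, so the Isom space $\underline{\mathrm{Isom}}_C\big((\mathscr{X},\xi),(Y,\xi_Y)\times_k C\big)$ is surjective onto $C$; moreover it is finite over $C$ because the diagonal of the separated Deligne--Mumford stack $M_{2d,k}$ is finite (in particular a finite algebraic space over the scheme $C$ is a scheme). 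Picking a reduced irreducible component $C'$ of it that dominates $C$, the map $C'\to C$ is finite and, being torsion-free over the Dedekind scheme $C$, flat; and over $C'$ there is a tautological isomorphism $\mathscr{X}\times_C C'\simeq Y\times_k C'$. This contradicts non-isotriviality, so $C$ is not contracted in the map to the coarse moduli space.

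Now I would argue by contradiction on the conclusion. Assume the heights $h(\mathscr{X}_t)$ are constant and not all geometric fibers are supersingular, so their common value is a finite integer $h$ with $1\le h\le 10$. Then $g$ factors through the closed substack $M^{(h)}_{2d,k}$, and since no fiber has height $\ge h+1$, the image $g(C)$ is disjoint from $M^{(h+1)}_{2d,k}$. Pulling back the section $g_h\in\Gamma\big(M^{(h)}_{2d,k},\lambda^{\otimes(p^h-1)}\big)$, whose zero locus is exactly $M^{(h+1)}_{2d,k}$, yields a global section of $g^{*}\lambda^{\otimes(p^h-1)}$ on the proper curve $C$ with empty zero locus; hence $g^{*}\lambda^{\otimes(p^h-1)}$ is trivial and $(p^h-1)\deg(g^{*}\lambda)=0$, forcing $\deg(g^{*}\lambda)=0$. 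But both bulleted hypotheses of Theorem \ref{positivity} hold for $g$ (we have just checked the first, and the second holds because $\eta$ maps into $M^{\circ}_{2d,k}$), so $\deg(g^{*}\lambda)>0$ --- a contradiction. Therefore the heights are non-constant unless every geometric fiber is supersingular.

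The main obstacle I expect is the second step: carefully deducing from ``the image in the coarse moduli space is a single point'' that the family becomes constant after a finite flat base change, while keeping track of the algebraic-space (rather than scheme) nature of $\mathscr{X}$, the finiteness of the relevant Isom space, and the flatness of the chosen component over $C$. Once that is in place, the remainder is the formal argument above, which is precisely the polarized-case proof of \cite[Theorem 15.3]{van-katsura}, \cite[Corollary 5.5]{Maulik} carried out with $M^{\circ}_{2d,k}$ replaced throughout by ``$M_{2d,k}$ meeting $M^{\circ}_{2d,k}$''.
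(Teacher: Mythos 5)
Your main argument is exactly the paper's proof: constancy of the (finite) height $h$ puts the image of $C$ in $M^{(h)}_{2d,k}\setminus M^{(h+1)}_{2d,k}$, the pullback of $g_h$ is then a nowhere-vanishing section of the pullback of $\lambda^{\otimes(p^h-1)}$, so $\deg(g^*\lambda)=0$, contradicting Theorem \ref{positivity}. The paper states this in three sentences and leaves the verification of the two bulleted hypotheses of Theorem \ref{positivity} implicit; you supply that verification, which is worthwhile.

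However, your justification of the first hypothesis contains a false claim: $M_{2d,k}$ is \emph{not} separated, as the paper points out explicitly in the remark immediately following this corollary (the failure of separatedness is tied to flops), so you cannot appeal to finiteness of its diagonal to conclude that the Isom space is finite (rather than merely quasi-finite and separated) over $C$. Without properness of the Isom space, the component $C'$ you select need not surject onto $C$, and you would only obtain an isomorphism over a dense open subset, which does not contradict the paper's definition of non-isotriviality. The step is repairable: if the composite of $g$ with the map to the coarse space is constant with value $[(Y,\xi_Y)]$, then since the generic fiber is polarized, $\xi_Y$ is ample; as every geometric fiber is then isomorphic to $(Y,\xi_Y)$ as a quasi-polarized surface, every $\xi(t)$ is ample and the whole of $C$ maps into the polarized locus $M^{\circ}_{2d,k}$, which \emph{is} separated with finite diagonal \cite[Theorem 4.3.3]{Rizov06}. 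Running your Isom-space argument inside $M^{\circ}_{2d,k}$ then gives the finite flat $C'\to C$ and the tautological isomorphism over all of $C'$, contradicting non-isotriviality as desired.
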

\begin{proof}
If the heights of the geometric fibers $\mathscr{X}_{t}$ are constant, and they are equal to $h$ ($1\leq{h}\leq10$), then the image of $C$ is contained in ${M^{(h)}_{2d, k}}\setminus{M^{(h+1)}_{2d, k}}$. The pullback of $\lambda^{\otimes{(p^h-1)}}$  to $C$ is trivial since the pullback of $g_h$ is a global section which is everywhere nonzero on $C$. However the pullback of $\lambda$ to $C$ has positive degree by Theorem \ref{positivity}, which is absurd.
\end{proof}
\begin{rem}
\rm{The polarized locus $M^{\circ}_{2d, k}$ is separated \cite[Theorem 4.3.3]{Rizov06}, but $M_{2d, k}$ is not separated. The non-separatedness of $M_{2d, k}$ is related to the existence of flops; see \cite[last paragraph in p.\,2362]{Maulik}. So we do not expect ${M^{(h)}_{2d, k}}\setminus{M^{(h+1)}_{2d, k}}$ is ``quasi-affine" in a reasonable sense. However, it does not cause any problem to obtain Corollary \ref{jump} thanks to Theorem \ref{positivity}. Compare the proof of Corollary \ref{jump} with the proof of \cite[Theorem 15.3]{van-katsura}.}
\end{rem}

\section{Proof of Theorem \ref{main}}\label{proof}  
In this section, for a discretely valued field $L$, we denote its valuation ring by $\O_L$, the generic point by $\eta \in \Spec{\O_L}$, and the closed point by $s \in \Spec{\O_L}$. \par
Let $k$ be an algebraically closed field of characteristic $p\geq5$, and $C$ a proper smooth curve over $k$ with function field $K=k(C)$. Let $(X,\mathscr{L})$ be a non-isotrivial primitively polarized $K3$ surface over $K$ of degree $2d$ with $p>18d+4$ and $3\leq{h(X)}\leq10$. Then there exist a non-empty Zariski open set $U\subset{C}$ and a family of $K3$ surfaces $$\mathscr{X}_{U} \to {U}$$ with a polarization $\xi_{U}$ of degree $2d$ which extends $(X,\mathscr{L})$. The set of closed points outside $U$ is denoted by $C\setminus{U}=\lbrace{v_1},\dotsc,{v_m}\rbrace$. Take a closed point $v_i\in{C\setminus{U}}$. We put $K_i:=K$ and consider it as a discretely valued field with respect to the valuation defined by ${v_i}$. \par
First, we shall extend $\mathscr{X}_{U}$ to a quasi-polarized family of $K3$ surfaces over $C$, after replacing $C$ by a finite covering of it. This step is now well-understood thanks to Maulik's work \cite{Maulik}. We briefly recall his argument with a minor modification; see Remark \ref{rem}. In \cite[Section 4.3]{Maulik}, Maulik constructed a projective scheme $${\mathscr{X}'_i} \to {\Spec{\O_{K'_i}}}, $$after taking the base change to a finite extension ${K'_i}/{K_i}$, which satisfies the following conditions: 
\begin{enumerate}
\item $\mathscr{X}'_i$ is Cohen-Macaulay, $\Q$-factorial, and regular away from finitely many points,
\item the generic fiber $\mathscr{X}'_{i,\eta}$ is isomorphic to $X_{K'_i}:=X\otimes_{K_i}{K'_i}$,
\item the closed fiber $\mathscr{X}'_{i,s}$ is reduced and irreducible components of $\mathscr{X}'_{i,s}$ are normal,
\item the singularities of $\mathscr{X}'_{i,s}$ are rational double points or of normal crossing type, and
\item the relative canonical bundle $K_{{\mathscr{X}'_i}/{\O_{K'_i}}}$ is trivial and there is a nef $\Q$-divisor $H$ on $\mathscr{X}'_i$ such that
$${H}\vert_{\mathscr{X}'_{i,\eta}}\thicksim_{\Q}\gamma\mathscr{L}_{K'_i}$$
for some positive rational number $\gamma\in\Q_{>0}$, where $\mathscr{L}_{K'_i}:=\mathscr{L}\otimes_{K_i}{K'_i}$.

\end{enumerate}
 Here we use the fact that $\mathscr{L}^{\otimes{3}}$ is very ample \cite{Saint} and the assumption $p>18d+4$ to apply Saito's results \cite{Saito} on the construction of projective semistable models.

\begin{claim}
If $h(X)\geq3$, then there exist a finite extension of discretely valued fields  ${L_i}/{K_i}$ and a primitively quasi-polarized family of $K3$ surfaces 
$$\mathscr{X}_i \to \Spec{\mathscr{O}_{L_i}},$$
which extends $(X_{L_i},\mathscr{L}_{L_i}):=(X\otimes_{K_i}{L_i}, \mathscr{L}\otimes_{K_i}{L_i})$.
\end{claim}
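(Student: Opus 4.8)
The plan is to upgrade Maulik's model $\mathscr{X}'_i$ to a smooth family of $K3$ surfaces over a finite extension of $\O_{K_i}$, the hypothesis $h(X)\geq 3$ being used precisely to exclude the degenerate special fibres. First, after a further finite extension $L_i/K'_i$ and a birational modification over $\O_{L_i}$ --- continuing Maulik's construction, which is available here because $\mathscr{L}^{\otimes 3}$ is very ample and $p>18d+4$ (see \cite{Saito}, \cite[Section~4]{Maulik}, and \cite{Liedtke-Matsumoto}) --- I would arrange a projective flat morphism $f_i\colon\mathscr{Y}_i\to\Spec\O_{L_i}$ with $\mathscr{Y}_i$ regular, $K_{\mathscr{Y}_i/\O_{L_i}}$ trivial, generic fibre $X_{L_i}$, and special fibre $\mathscr{Y}_{i,s}$ a reduced simple normal crossings divisor. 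By the Kulikov--Persson--Pinkham classification of such degenerations, in the positive-characteristic form available under our hypotheses on $p$, $\mathscr{Y}_{i,s}$ is then of one of the three standard types: Type I, a smooth $K3$ surface; Type II, a chain of surfaces whose two ends are rational, whose interior components are ruled over an elliptic curve, and which are glued along elliptic curves; or Type III, a union of rational surfaces glued along rational curves whose dual complex is a triangulated $2$-sphere.

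The heart of the argument is to exclude Types II and III using $h(X)\geq 3$. In each of the three types the special fibre satisfies $H^1(\O)=0$ and $\dim H^2(\O)=1$ (a Mayer--Vietoris computation in Types II and III), so $R^1(f_i)_*\O_{\mathscr{Y}_i}=0$ and $R^2(f_i)_*\O_{\mathscr{Y}_i}$ is invertible; hence the relative Artin--Mazur functor $\Phi^2_{\mathscr{Y}_i/\O_{L_i}}$ is pro-represented by a smooth one-dimensional formal group over $\O_{L_i}$ \cite{Artin-Mazur}, whose generic fibre is $\hatBr(X_{\overline{L_i}})$ --- of height $h(X)$ --- and whose special fibre is $\Phi^2_{\mathscr{Y}_{i,s}}$. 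As the height of a one-dimensional formal group over a discrete valuation ring can only increase under specialization, $h(X)\leq h(\Phi^2_{\mathscr{Y}_{i,s}})$. On the other hand, a direct computation identifies the Artin--Mazur formal group of a Type II (resp.\ Type III) semistable $K3$ surface with the formal group of an elliptic curve occurring in its double locus (resp.\ with $\widehat{\mathbb{G}}_m$), which has height $\leq 2$ (resp.\ $=1$). Thus a Type II or Type III special fibre would force $h(X)\leq 2$, contrary to hypothesis, so $\mathscr{Y}_{i,s}$ must be of Type I.

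It remains to extract the quasi-polarization. Since $\mathscr{Y}_{i,s}$ is now a smooth $K3$ surface, $f_i$ is a smooth family of $K3$ surfaces extending $X_{L_i}$, and as $\mathscr{Y}_{i,s}$ is irreducible the restriction $\underline{\Pic}(\mathscr{Y}_i/\O_{L_i})\to\Pic(X_{L_i})$ is surjective; so $\mathscr{L}_{L_i}$ extends to a section $\widetilde{\mathscr{L}}$ of $\underline{\Pic}(\mathscr{Y}_i/\O_{L_i})$ with $\widetilde{\mathscr{L}}^2=2d$ on every fibre. After a finite sequence of flops over $\O_{L_i}$ --- each an isomorphism over the generic point and preserving smoothness of the total space and of the $K3$ fibres (cf.\ \cite{Maulik}) --- I may assume $\widetilde{\mathscr{L}}$ is nef on $\mathscr{Y}_{i,s}$, hence big and nef there since $\widetilde{\mathscr{L}}^2=2d>0$, while it is still the ample $\mathscr{L}_{L_i}$ on the generic fibre; so $\widetilde{\mathscr{L}}$ is a quasi-polarization, which one checks (as in \cite{Maulik}) to be primitive. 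Taking $\mathscr{X}_i:=\mathscr{Y}_i$ with the quasi-polarization $\widetilde{\mathscr{L}}$, together with the extension $L_i$ produced along the way, proves the Claim.

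The main obstacle is the second step: establishing that Type II and Type III reductions force the generic fibre to have height at most $2$. This rests on the identification of the Artin--Mazur formal group of the semistable $K3$ fibre together with the semicontinuity of the height in the associated one-dimensional formal group over $\O_{L_i}$; once the degenerate types are excluded, the first and third steps are the by-now-standard manipulations with semistable models and flops already present in \cite{Maulik}.
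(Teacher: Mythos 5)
Your overall strategy --- classify the possible degenerate special fibres and use $h(X)\geq 3$ together with the behaviour of the formal Brauer group under specialization to force the special fibre to be a smooth $K3$ surface --- is exactly the strategy of the paper's proof; your exclusion of Types II and III is precisely the content of the Rudakov--Zink--Shafarevich result \cite[Proposition 3]{Rudakov} that the paper cites, and your treatment of the quasi-polarization is a harmless variant (the paper gets nefness for free by pulling back the nef $\Q$-divisor $H$ from Maulik's model and invoking \cite[Lemma 5.12]{Maulik}, rather than performing flops, and it gets bigness from local constancy of Euler characteristics rather than from $\widetilde{\mathscr{L}}^2>0$).

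However, your first step contains a genuine gap. You assert that, after a finite extension and a birational modification, one can arrange a \emph{projective} flat morphism $\mathscr{Y}_i\to\Spec{\O_{L_i}}$ with $\mathscr{Y}_i$ \emph{regular}, trivial relative canonical bundle, and \emph{reduced simple normal crossings} special fibre --- that is, a projective Kulikov model. This is not what Maulik's construction provides, and the existence of such semistable models in positive characteristic is precisely the open problem (\cite[Assumption ($\star$)]{Liedtke-Matsumoto}) responsible for the hypotheses on $p$ in the first place. What Maulik's construction under $p>18d+4$ actually yields is a projective model $\mathscr{X}'_i$ that is only $\Q$-factorial, Cohen--Macaulay, and regular away from finitely many points, whose special fibre may have \emph{rational double points} in addition to normal crossings. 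Removing the rational double points while keeping the family flat with trivial canonical bundle requires Artin's simultaneous resolution \cite[Theorem 2]{Artin}, which exists only after a further finite base change and only in the category of \emph{algebraic spaces}; by Matsumoto's example \cite[Example 5.2]{Matsumoto} one cannot expect a projective, or even scheme-theoretic, such model (this is the point of Remark \ref{rem}). The paper therefore passes to an algebraic space $\mathscr{X}_i\to\Spec{\O_{L_i}}$ via simultaneous resolution and invokes Nakkajima \cite[Proposition 3.4]{Nakkajima} to see that its special fibre is a combinatorial $K3$ surface of Type I, II, or III; from that point on your argument and the paper's coincide. Your proof becomes correct once you replace the claimed projective SNC model by this simultaneous-resolution step.
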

\begin{proof}
We shall apply Artin's results on the simultaneous resolution of rational double points in the category of algebraic spaces \cite[Theorem 2]{Artin}. Then we find a finite extension of discretely valued fields ${L_i}/{K'_i}$, a proper smooth family of $K3$ surfaces
$$\mathscr{X}_i \to \Spec{\mathscr{O}_{L_i}},$$
and a simultaneous resolution over $\Spec{\mathscr{O}_{L_i}}$
$$\mathscr{X}_i \to {\mathscr{X}'_i}\otimes_{\mathscr{O}_{K'_i}}{\mathscr{O}_{L_i}}.$$
 Here $\mathscr{X}_i$ is an algebraic space over $\Spec{\mathscr{O}_{L_i}}$, and $\mathscr{X}_i \to {\mathscr{X}'_i}\otimes_{\O_{K'_i}}{\mathscr{O}_{L_i}}$ induces an isomorphism on the generic fiber and the minimal resolution of rational double points on the closed fiber. \par 
We apply Nakkajima's results \cite[Proposition 3.4]{Nakkajima}, and see that the closed fiber $\mathscr{X}_{i,s}$ is a combinatorial $K3$ surface, which means one of the following types:
\begin{enumerate}
\item smooth $K3$ surface (type I),
\item two rational surfaces joined by a chain of ruled surfaces over an elliptic curve (type II),
\item union of rational surfaces, whose dual graph of the configuration is a triangulation of $S^2$ (type III).
\end{enumerate}\par
In \cite[Proposition 3]{Rudakov}, Rudakov, Zink, and Shafarevich showed that the height $h(\mathscr{X}_{i,s})$ of the closed fiber satisfies $h(\mathscr{X}_{i,s})\geq{h(\mathscr{X}_{i,{\eta}})}$. Moreover, $1\leq{h(\mathscr{X}_{i,s})}\leq2$ is satisfied if the closed fiber $\mathscr{X}_{i,s}$ is not smooth. So by our assumption $h(X)\geq3$, the family of $K3$ surfaces $\mathscr{X}_i \to {\Spec{\mathscr{O}_{L_i}}}$ is necessarily smooth. \par
We shall show that the line bundle $\mathscr{L}_i$ on $\mathscr{X}_i$ which extends $\mathscr{L}_{L_i}$ is a quasi-polarization. We follow Maulik's argument as in \cite[Lemma 4.10]{Maulik}. We denote the natural morphism $\mathscr{X}_i \to \mathscr{X}'_i$ by $g$. The pullback $g^*{H}$ of the nef $\Q$-divisor $H$ is also nef. Since we have ${H}\vert_{\mathscr{X}'_{i,\eta}}\thicksim_{\Q}\gamma\mathscr{L}_{K'_i}$ on $\mathscr{X}'_{i,\eta}$, we also have
$$g^*{H}\thicksim_{\Q}\gamma\mathscr{L}_i$$
and hence $\mathscr{L}_i$ is nef by \cite[Lemma 5.12]{Maulik}. The bigness of $\mathscr{L}_i$ follows from the fact that the Euler-Poincar$\acute{\rm{e}}$ characteristics are locally constant in a flat family.
Since the order of a torsion element in the cokernel of $$\Pic(\mathscr{X}_{i, \overline{\eta}}) \hookrightarrow \Pic(\mathscr{X}_{i,s})$$ is a power of $p$ \cite[Proposition 3.6]{Maulik-Poonen} and $$p>18d+4>2d=(\mathscr{L})^2=(\mathscr{L}_i)^2,$$ the line bundle $\mathscr{L}_i$ is also primitive on the closed fiber.
\end{proof}
\vspace*{0.1in}
{\sc Proof of Theorem \ref{main}}. We now give a proof of Theorem \ref{main}. The family $\mathscr{X}_{U} \to {U}$ can be extended to a primitively quasi-polarized family of $K3$ surfaces $$\mathscr{X} \to {C}$$ over a proper smooth curve $C$, after replacing it by a finite covering of it \cite[Lemma 7.2]{Maulik}. \par
By Corollary \ref{jump}, there exists a closed point $v \in{C}$ such that the height of $X$ jumps at $v$. So we achieve Theorem \ref{main}.  \qed\par
\vspace*{0.1in}
{\sc Proof of Corollary \ref{supersingular-reduction}}. Assume $$\rho(X_{\overline{K}})\geq21-2h(X),$$
 and take a closed point $v\in{C}$ such that the height of $X$ jumps at $v$, namely, $h(\mathscr{X}_v)>h(X)$. Then $X$ necessarily has supersingular reduction at $v$. Indeed, if $h(\mathscr{X}_v)\neq\infty$, then we have 
$$22-2h(\mathscr{X}_v)\geq\rho(\mathscr{X}_v)\geq\rho(X_{\overline{K}})\geq21-2h(X).$$
Hence we have $h(X)\geq{h(\mathscr{X}_v)}-1/2$, which is absurd. \qed
\vspace*{0.1in}

In Theorem \ref{main}, we assume $p>18d+4$ and $h(X)\geq3$, but one expects that the following conjecture is true without any assumptions on $p$, $d$, and $h(X)$.
 \begin{conj}[{\cite[p.288]{van-katsura}, \cite[p.2390]{Maulik}}]\label{conj}
  Let $k$ be an algebraically closed field of positive characteristic $p>0$, and $C$ a proper smooth curve over $k$ with function field $K=k(C)$. Let $X$ be a non-isotrivial non-supersingular $K3$ surface over $K$. Then, after replacing $C$ by a finite covering of it, there exists a semistable family of combinatorial $K3$ surfaces $\mathscr{X} \to C$ which extends $X$ and satisfies $h(\mathscr{X}_v)>h(X)$ at some closed point $v\in{C}$.
 \end{conj}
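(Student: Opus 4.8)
The plan is to run the argument behind Theorem \ref{main} with its two standing hypotheses $p>18d+4$ and $h(X)\geq 3$ removed, keeping track of exactly where each was used and what conjectural or case-by-case input must take its place. First I would spread out: since $X$ is projective over $K$ it carries an ample line bundle $\mathscr{L}$ of some degree $2d$ with $p\nmid 2d$, so after shrinking there is a polarized family $\mathscr{X}_U\to U$ of degree $2d$ over a dense open $U\subset C$ extending $(X,\mathscr{L})$; write $C\setminus U=\{v_1,\dots,v_m\}$ and regard $K$ as a discretely valued field $K_{v_i}$ at each $v_i$.

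Next, at each $v_i$ I would construct a semistable model. This is precisely the step where the proof of Theorem \ref{main} invoked Saito's \cite{Saito} construction of projective semistable models, and hence the bound $p>18d+4$; in general one must instead grant potential semistable reduction for the $K3$ surface $X$ over $K_{v_i}$, i.e.\ \cite[Assumption ($\star$)]{Liedtke-Matsumoto}. Granting that, Maulik's \cite{Maulik} birational modifications produce, after a finite extension $K'_i/K_{v_i}$, a model $\mathscr{X}'_i\to\Spec\O_{K'_i}$ with trivial relative canonical bundle and a nef $\Q$-divisor restricting to a positive multiple of $\mathscr{L}$ on the generic fiber, with rational-double-point or normal-crossing singularities; Artin's \cite{Artin} simultaneous resolution in the category of algebraic spaces then yields, over a further finite extension $L_i/K'_i$, a semistable model $\mathscr{X}_i\to\Spec\O_{L_i}$ whose special fiber is a combinatorial $K3$ surface by Nakkajima's theorem \cite{Nakkajima}, and on which the line bundle $\mathscr{L}_i$ extending $\mathscr{L}_{L_i}$ is a primitive quasi-polarization by the arguments of Maulik and Maulik--Poonen \cite{Maulik-Poonen} reproduced in the Claim above. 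Gluing the $\mathscr{X}_i$ to $\mathscr{X}_U$ over a common finite cover $C'\to C$ (with $U'\subset C'$ the preimage of $U$) then produces a semistable family of combinatorial $K3$ surfaces $\mathscr{X}\to C'$ extending a base change of $X$, carrying a primitive quasi-polarization of degree $2d$ whose restriction to $U'$ is a polarization.

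It remains to locate a closed point where the height jumps. If $h(X)\geq 3$, the Rudakov--Zink--Shafarevich inequality $h(\mathscr{X}_{i,s})\geq h(\mathscr{X}_{i,\eta})=h(X)$ \cite{Rudakov}, together with the fact that a non-smooth combinatorial $K3$ surface has height at most $2$, forces every special fiber $\mathscr{X}_{i,s}$ to be smooth; hence $\mathscr{X}\to C'$ is a smooth non-isotrivial family over a proper curve with generically polarized quasi-polarization, and Corollary \ref{jump} yields a point $v$ with $h(\mathscr{X}_v)>h(X)$, the alternative ``all fibers supersingular'' being excluded since $X$ is non-supersingular. This recovers Theorem \ref{main} for $p\geq 3$ without the bound $p>18d+4$. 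If instead $h(X)\in\{1,2\}$, the special fibers $\mathscr{X}_{i,s}$ may genuinely be of type II or III and Corollary \ref{jump} does not apply directly; here I would work with the Hodge bundle $\lambda=\pi_*(\omega_{\mathscr{X}/C'})$, which is still defined for the whole semistable family and still has positive degree on $C'$ by Theorem \ref{positivity}, together with an extension of the van der Geer--Katsura sections $g_h\in\Gamma(\lambda^{\otimes(p^h-1)})$ \cite{van-katsura} across the finitely many boundary points. If every smooth fiber of $\mathscr{X}\to C'$ had height exactly $h=h(X)$, then the pullback of $g_h$ to $U'$ would be everywhere nonzero, so the divisor of zeros of its extension would be supported on the points of $C'$ lying over $\{v_1,\dots,v_m\}$; matching the vanishing order of $g_h$ at such a point with the height of the limiting combinatorial $K3$ surface (or of the nearby smooth fibers) should then force some fiber to have height $>h$, since otherwise $\deg\lambda|_{C'}\leq 0$, contradicting Theorem \ref{positivity}.

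\textbf{Main obstacle.} The input used above --- potential semistable reduction of a $K3$ surface over a discretely valued field of residue characteristic $p$ --- is exactly what is unavailable in small characteristic, which is why the statement is only a conjecture: granting \cite[Assumption ($\star$)]{Liedtke-Matsumoto} one obtains it for $p\geq 3$ whenever $h(X)\geq 3$. Beyond that, the genuinely hard part is the case $h(X)\in\{1,2\}$, where one must make precise how the sections $g_h$ extend over a semistable compactification of $M_{2d,k}$ and how their order of vanishing along the boundary records the height of the limiting combinatorial $K3$ surface, so that the positivity of $\deg\lambda|_{C'}$ truly forces a jump rather than merely a degeneration. Establishing this compatibility between the height stratification and the boundary of the moduli stack is the step I would expect to be the main obstacle.
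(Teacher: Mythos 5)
The statement you are asked to prove is Conjecture \ref{conj}: the paper itself offers no proof of it, and explicitly says that the natural strategy --- extending Maulik's positivity result (Theorem \ref{positivity}) and the van der Geer--Katsura sections to a compactification of $M_{2d,k}$ --- ``is not currently available.'' Your proposal does not close this gap; it reproduces, essentially verbatim, the paper's own analysis of why the conjecture is open. The part of your argument that works (the case $h(X)\geq 3$, granting potential semistable reduction, i.e.\ \cite[Assumption ($\star$)]{Liedtke-Matsumoto}) is exactly the content of Theorem \ref{main} together with the remark that the degree bound $p>18d+4$ can be traded for Assumption ($\star$); and the part you flag as the ``main obstacle'' (the case $h(X)\in\{1,2\}$, where the special fibers may be of type II or III) is precisely the open problem. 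So the proposal should be read as an honest reduction of the conjecture to two unproven inputs, not as a proof.

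Two specific overstatements in the sketch are worth correcting. First, you assert that $X$ carries an ample line bundle of degree $2d$ with $p\nmid 2d$; this can fail (e.g.\ if $\Pic(X_{\overline K})$ is generated by a class of self-intersection $2p$, every polarization has degree divisible by $p$), and since $M_{2d}$ is only known to be smooth over $\Spec\Z[1/2d]$, this is not a cosmetic point for a conjecture with no hypothesis on $p$. Second, in the $h\in\{1,2\}$ case you write that $\lambda$ ``still has positive degree on $C'$ by Theorem \ref{positivity}'' for the whole semistable family; but Theorem \ref{positivity} applies to maps $C\to M_{2d,k}$, i.e.\ to families that are smooth everywhere, so once some fiber degenerates there is no map to $M_{2d,k}$ and the positivity statement you need is exactly the unavailable extension to a compactification. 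The subsequent step --- relating the order of vanishing of an extended $g_h$ at a boundary point to the height of the limiting combinatorial $K3$ surface --- is likewise not established anywhere, and your phrase ``should then force'' correctly signals that this is a hope rather than an argument.
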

A natural strategy to prove Conjecture \ref{conj} is to generalize Maulik's results (Theorem \ref{positivity}) to an appropriate compactification of the moduli stack $M_{2d, k}$, but such results are not currently available.
\begin{rem}\label{rem}
\rm{In the proof of Theorem \ref{main}, the construction of a quasi-polarized family of $K3$ surfaces $\mathscr{X} \to {C}$ is basically the same as in Maulik's proof of \cite[Theorem 4.2]{Maulik}. But there is a slight technical difference. In \cite[Theorem 4.2]{Maulik}, Maulik considered families of \textit{supersingular} $K3$ surfaces, and he constructed families $\mathscr{X} \to {C}$ \textit{as schemes}. In fact, he applied Artin's simultaneous resolution over the completion of the local ring at each $v_i\in{C}\setminus{U}$ in order to show that there is no rational double point in the closed fiber $\mathscr{X}'_{i, v_i}$; see the proof of  \cite[Lemma 4.10]{Maulik}. (This argument works only for supersingular $K3$ surfaces.) On the other hand, we apply Artin's simultaneous resolution \cite{Artin} over (the henselization of) the local ring at each $v_i\in{C}\setminus{U}$ rather than the completion of it. (This is possible since Artin's results in \cite{Artin} are valid over henselian (not necessarily complete) discrete valuation rings.) Consequently, in the proof of Theorem \ref{main}, we construct families $\mathscr{X} \to {C}$ \textit{as algebraic spaces}. Note that the example discussed by Matsumoto in \cite[Example 5.2]{Matsumoto} shows that one does not expect to construct families $\mathscr{X}\to{C}$ as schemes in general. For details, compare the proof of Theorem \ref{main} with Maulik's proof of \cite[Theorem 4.2, Lemma 4.7]{Maulik}.}
\end{rem}
\section{Deformations of $K3$ surfaces with constant height and Picard number}\label{deformation}
 In the rest of this paper, we shall give a proof of Theorem \ref{main2}. We fix an algebraically closed field $k$ of characteristic $p>0$. In this section we show the following.
\begin{prop}\label{defo}
Let $(X_0,\L_0)$ be a polarized $K3$ surface over $k$ satisfying $2\leq{h(X_0)}\leq10$. Then there exists a non-isotrivial polarized $K3$ surface $(X,\mathscr{L})$ over the function field of a proper smooth curve over $k$ such that $\rho(X)=\rho(X_0)$, $h(X)=h(X_0)$, and $(\mathscr{L})^2=(\mathscr{L}_0)^2.$ 
\end{prop}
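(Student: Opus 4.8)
The plan is to start with the polarized $K3$ surface $(X_0, \L_0)$ over $k$ of degree $2d := (\L_0)^2$, which corresponds to a $k$-point $s_0 \in M^{\circ}_{2d, k}$ lying in the stratum $M^{(h)}_{2d,k} \setminus M^{(h+1)}_{2d,k}$ where $h = h(X_0)$, and to find inside the moduli stack a complete curve through $s_0$ along which both the height and the Picard number stay constant. The key structural fact I would exploit is that the Picard lattice $\Pic(X_0) \supseteq \langle \L_0 \rangle$ is realized generically over the period domain: if $\Lambda_0 \subseteq \Pic(X_0)$ is a sublattice containing $\L_0$ of rank $\rho(X_0)$, then the locus of polarized $K3$ surfaces whose Picard lattice contains (a fixed primitive embedding of) $\Lambda_0$ is a closed substack $M_{\Lambda_0} \subseteq M^{\circ}_{2d,k}$, and the very general point of (a component of) $M_{\Lambda_0}$ has Picard lattice exactly $\Lambda_0$, hence Picard number exactly $\rho(X_0)$. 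One also needs that the generic point of $M_{\Lambda_0}$ has height exactly $h$: this follows because $M_{\Lambda_0}$ passes through $s_0$ (so the generic height is $\leq h$ by semicontinuity), and the generic height is $\geq h$ because $\rho \geq 22 - 2h(\text{generic})$ forces $h(\text{generic}) \geq (22 - \rho)/2 = h(X_0)$ — here using exactly the Artin--Mazur--Igusa inequality and the fact that $\rho(X_0) = 22 - 2h(X_0)$ is \emph{not} assumed, so I must be slightly more careful: I only get $h(\text{generic}) \geq \lceil (22-\rho)/2 \rceil$. If $\rho(X_0) = 22 - 2h(X_0)$ the argument closes immediately; in general one should instead argue that the \emph{locus of constant height $h$} inside $M_{\Lambda_0}$, namely $M_{\Lambda_0} \cap (M^{(h)}_{2d,k} \setminus M^{(h+1)}_{2d,k})$, is open and dense in $M_{\Lambda_0}$, so that the generic point of $M_{\Lambda_0}$ has the desired height $h$ as well (this uses that $s_0$ lies in it and that $M^{(h+1)}_{2d,k}$ is a proper closed subset not containing the generic point, which holds since $M_{\Lambda_0}$ is, say, irreducible after passing to a component).

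Next I would pass to a suitable component $Z \subseteq M_{\Lambda_0}$ through $s_0$ of positive dimension — positivity of the dimension is where the hypothesis $h(X_0) \geq 2$ enters, since $\dim M^{(h)}_{2d,k} = 20 - (h-1) \geq \rho(X_0) \geq \rho$ gives room, and more precisely the expected dimension of $M_{\Lambda_0}$ is $20 - (\rho(X_0) - 1) = 21 - \rho(X_0) \geq 21 - (22 - 2h) = 2h - 1 \geq 3 > 0$ when $h \geq 2$ (one should double-check the inequality $\rho(X_0) \leq 22 - 2h(X_0)$ is the only constraint being used, which is the Artin inequality quoted in Section~\ref{height}). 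Having $\dim Z \geq 1$, I cut $Z$ down by generic hyperplane sections — using that the coarse space of $M^{\circ}_{2d,k}$ is quasi-projective and $\lambda$ is ample on it by Theorem~\ref{positivity} — to obtain a curve $C^\circ \subseteq Z$ through $s_0$ that is not contracted in the coarse space and meets $M^{\circ}_{2d,k}$ nontrivially, and whose generic point still has Picard number $\rho(X_0)$ and height $h$ (a generic complete intersection curve in $Z$ avoids the proper closed loci where $\rho$ jumps or where $h$ jumps, as long as those loci are proper in $Z$, which is exactly the density statement established above). Normalizing and completing, I get a proper smooth curve $C$ with a map $C \to M^{\circ}_{2d,k} \subseteq M_{2d,k}$, finite onto its image, meeting the polarized locus; pulling back the universal family (after a finite base change to rigidify / kill the stacky structure, using that $M^{\circ}_{2d,k}$ is a separated Deligne--Mumford stack) yields a polarized family of $K3$ surfaces $\mathscr{X} \to C$ with polarization of degree $2d$, whose generic fiber $(X, \mathscr{L}) := (\mathscr{X}_\eta, \xi(\eta))$ has $\rho(X) = \rho(X_0)$, $h(X) = h(X_0)$, and $(\mathscr{L})^2 = 2d = (\L_0)^2$.

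Finally, non-isotriviality: the map $C \to M_{2d,k}$ is non-constant (its image is a curve, by construction it is not contracted in the coarse space), so the family $\mathscr{X}\to C$ cannot be, even after finite flat base change, a constant family $Y \times_k C'$, since the latter would give a constant map to the coarse moduli space. Hence $(X,\mathscr{L})$ is non-isotrivial in the sense defined in the introduction, completing the proof.

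\emph{Main obstacle.} The delicate point is controlling the \emph{generic} Picard number along the curve $C$: I need that a very general point of the Noether--Lefschetz-type locus $M_{\Lambda_0}$ has Picard lattice exactly $\Lambda_0$ (no extra algebraic classes), and that a generic curve through $s_0$ inside it inherits this. In characteristic $p$ this is subtler than over $\C$ — one cannot simply invoke Hodge theory — and the cleanest route is probably to lift to characteristic $0$ (or to a CM / mixed-characteristic point) and use the density of Noether--Lefschetz loci together with specialization of Picard groups (the injectivity $\Pic(\mathscr{X}_{\bar\eta}) \hookrightarrow \Pic(\mathscr{X}_s)$, cf.\ \cite{Maulik-Poonen}), or alternatively to invoke the known structure of the ordinary ($h=1$) and higher-height strata and the fact that the moduli of $K3$s with prescribed Picard lattice behaves as expected; I expect the actual paper handles this via a deformation-theoretic argument (the section title is ``Deformations of $K3$ surfaces with constant height and Picard number''), deforming $(X_0, \L_0)$ within a formal/algebraic family on which the classes of $\Lambda_0$ remain algebraic (they do, being the pullback of a fixed Picard lattice) while no new classes appear generically (a dimension count against the Noether--Lefschetz divisors), which is morally what I sketched above but needs the characteristic-$p$ deformation theory of the Picard scheme and of $\widehat{\mathrm{Br}}$ to be spelled out carefully.
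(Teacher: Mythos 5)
Your overall strategy (impose the Picard lattice, intersect with the height stratification, count dimensions, cut by hyperplanes, read off non-isotriviality from non-constancy in the coarse space) is the same in spirit as the paper's, which however works on the $20$-dimensional versal formal deformation space $\Spf{k[[t_1,\dotsc,t_{20}]]}$ of $X_0$ rather than in $M^{\circ}_{2d,k}$. But there are two genuine gaps. First, your claim that $M_{\Lambda_0}\cap(M^{(h)}_{2d,k}\setminus M^{(h+1)}_{2d,k})$ is \emph{open and dense} in $M_{\Lambda_0}$ is false in general: $M^{(h)}_{2d,k}$ is \emph{closed} (height jumps up on closed subsets), so $M_{\Lambda_0}\cap M^{(h)}_{2d,k}$ is a closed subset of $M_{\Lambda_0}$, dense only if the generic point of $M_{\Lambda_0}$ already has height $\geq h$ --- and when $\rho(X_0)<22-2h(X_0)$ there is no reason for that (e.g.\ $\rho=2$, $h=9$: the generic member of an $18$-dimensional Noether--Lefschetz locus will typically have small height). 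That $s_0$ lies in the intersection says nothing about the generic point of $M_{\Lambda_0}$. The correct move, and what the paper does, is to work \emph{inside} the intersection and bound its dimension from below: by van der Geer--Katsura each $M^{(j+1)}$ is cut out of $M^{(j)}$ by one equation (the section $g_j$ of $\lambda^{\otimes(p^j-1)}$), so every component through $s_0$ of the locus where the Picard lattice extends and the height is $\geq h$ has codimension at most $\rho+(h-1)$ in the $20$-dimensional versal space, and $20-\rho-(h-1)\geq 20-(22-2h)-(h-1)=h-1\geq1$ precisely when $h\geq2$. Your count $21-\rho>0$ only bounds $\dim M_{\Lambda_0}$ and does not by itself produce a positive-dimensional locus of height exactly $h$.

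Second, your argument needs the very general point of a component of $M_{\Lambda_0}$ through $s_0$ to have Picard number exactly $\rho(X_0)$; you correctly flag this as the main obstacle but do not resolve it. The paper avoids it entirely by staying on the versal deformation space: for any geometric point $s$ of the locus $S'$ over which a basis $\L_1,\dotsc,\L_{\rho(X_0)}$ of $\Pic(X_0)$ extends, specialization to the closed fiber gives an injection $\Pic(\mathscr{X}'_{s})\hookrightarrow\Pic(X_0)$, whence $\rho(\mathscr{X}'_s)\leq\rho(X_0)$, while the extended classes give the reverse inequality; no statement about very general points of Noether--Lefschetz loci is needed. (Minor bookkeeping: $M_{2d,k}$ has dimension $19$, not $20$; the $20$ is the dimension of the unpolarized versal deformation space.) With these two repairs your argument would go through and would essentially coincide with the paper's.
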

\begin{proof}
We use the deformation theory to construct a non-isotrivial family. Similar arguments may be found in \cite[Section 4]{Lieblich-Maulik}. Let $\pi\colon \mathscr{X} \to \mathscr{S}$ be the versal formal deformation of $X_0$ over $k$. It is known that $\mathscr{S}$ is formally smooth of dimension $20$ over $k$; so we have 
$$\mathscr{S}\simeq{\Spf{k[[t_1,\dotsc, t_{20}]]}}.$$
See \cite[Corollary 1.2]{Deligne}.\par
 Let $\L_1,\dotsc, \L_{\rho(X_0)}$ be a $\Z$-basis of $\Pic(X_0)$. Then the deformation space of the pair $(X_0,\L_i)$ is defined by an equation $${f_i}\in{{k[[t_1,\dotsc, t_{20}]]}}.$$
 See \cite[Proposition 1.5]{Deligne}. A linear combination of $\L_1,\dotsc, \L_{\rho(X_0)}$ is ample since $X_0$ is projective. Hence by Grothendieck's algebraization theorem, we get a universal family 
 $$\mathscr{X}' \to S'=\Spec({{k[[t_1,\dotsc, t_{20}]]}/({f_1,\dotsc,f_{\rho(X_0)}}}))$$
 of deformations of the tuple $(X_0, \L_1,\dotsc, \L_{\rho(X_0)}).$ For each $1\leq{h}\leq10$, let
$$S'^{(h)}:=\{\, s\in{S'} \mid {h(\mathscr{X}'_{s})\geq{h}}\,\}.$$
In \cite{van-katsura}, van der Geer and Katsura showed that there are elements 
$$g_1,\dotsc, g_{h(X_0)-1}\in{k[[t_1,\dotsc, t_{20}]]},$$ 
such that, for each $2\leq{h}\leq{h(X_0)}$, the closed subscheme $S'^{(h)}\subset{S'}$ is written as 
$$S'^{(h)}=\Spec({{k[[t_1,\dotsc, t_{20}]]}/({f_1,\dotsc, f_{\rho(X_0)},g_1,\dotsc,g_{h-1}}})).$$\par
Then the $K3$ surface $\mathscr{X}'_s$ corresponding to a geometric point $s$ of $S'^{(h(X_0))}$ satisfies 
$$\rho(\mathscr{X}'_s)=\rho(X_0),\qquad h(\mathscr{X}'_s)=h(X_0).$$
Moreover, since $\mathscr{L}_0$ is a $\Z$-linear combination of $\L_1,\dotsc, \L_{\rho(X_0)}$, the $K3$ surface $\mathscr{X}'_s$ has a line bundle $\mathscr{L}_s$ which lifts $\mathscr{L}_0$. The line bundle $\mathscr{L}_s$ is ample with $(\mathscr{L}_s)^2=(\mathscr{L}_0)^2$. 
Since 
$$\rho(X_0)+h(X_0)-1\leq21-h(X_0)<20,$$ 
the dimension of $S'^{(h(X_0))}$ is positive. Hence we find a non-isotrivial family over a complete local ring of dimension $\geq1$. After passing limits and cutting by hyperplanes, we find a non-isotrivial polarized $K3$ surface $(X, \mathscr{L})$ over a function field such that $\rho(X)=\rho(X_0)$, $h(X)=h(X_0)$, and  $(\mathscr{L})^2=(\mathscr{L}_0)^2$.
\end{proof}

By Proposition \ref{defo}, in order to construct non-isotrivial $K3$ surfaces $X$ over function fields satisfying $\rho(X)=22-2h(X)$ and $2\leq{h(X)}\leq10$, it suffices to construct such $K3$ surfaces over $k$. 
\par
We note that non-isotrivial $K3$ surfaces $X$ with $\rho(X)=22-2h(X)$ do \textit{not} exist when $p$ is odd and $h(X)=1$. This follows from the following result recently proved by Jang and Liedtke, independently.

\begin{thm}[{\cite[Theorem 3.7]{Jang}, \cite[Theorem 2.6]{Liedtke}}]\label{20}
Let $X$ be a $K3$ surface with $\rho(X)=20$ over an algebraically closed field $k$ of characteristic $p\geq3$. Then X is defined over a finite field.
\end{thm}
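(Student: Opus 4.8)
The plan is to use the rigidity imposed by $\rho(X)=20$ to lift $X$ all the way to characteristic zero, recognise the lift as a classically understood object, and then reduce back to a finite field. First, since $\rho(X)=20\neq 22$, the Tate conjecture for $K3$ surfaces (known for $p\geq 3$; see the references in the introduction) shows $X$ is not supersingular, so $h(X)<\infty$, and Artin's inequality $\rho(X)\leq 22-2h(X)$ then forces $h(X)=1$: the surface $X$ is \emph{ordinary}. Fix a polarization $\mathscr{L}$ on $X$ (one exists, $X$ being a $K3$ surface), of some degree $2d$, which gives a $k$-point of the separated moduli stack $M^{\circ}_{2d}$ (\cite{Rizov06}). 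By the Nygaard--Ogus theory of ordinary $K3$ surfaces (\cite{Nygaard}, \cite{Nygaard2}), $X$ admits a canonical (Serre--Tate) lift $\widetilde{\mathscr{X}}$ over the Witt ring $W:=W(k)$. A key property of the canonical lift is that every line bundle on $X$ lifts to $\widetilde{\mathscr{X}}$; in particular the ample bundle $\mathscr{L}$ lifts, so $\widetilde{\mathscr{X}}$ is projective (Grothendieck algebraization), carrying a polarization $\widetilde{\mathscr{L}}$ of degree $2d$, and $\rho(\widetilde{\mathscr{X}}_{\overline{\eta}})\geq 20$. Since $K3$ surfaces in characteristic $0$ have Picard number at most $20$, the generic geometric fibre $\widetilde{\mathscr{X}}_{\overline{\eta}}$ is a $K3$ surface over the algebraically closed field $\overline{\mathrm{Frac}(W)}$ of characteristic $0$ with $\rho=20$: a \emph{singular $K3$ surface}.

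Next I would invoke what is classically known about singular $K3$ surfaces. Such a surface in characteristic zero is defined over a number field: via the global Torelli theorem and surjectivity of the period map, its rank-$2$ transcendental lattice carries a CM structure, there are only countably many such surfaces up to isomorphism, and each has a model over a ring class field (see \cite{Huybrechts} and the references therein). Fixing an embedding $\overline{\mathbb{Q}}\hookrightarrow\overline{\mathrm{Frac}(W)}$, we obtain a number field $F$ and a polarized singular $K3$ surface $(Y,\mathscr{M})$ over $F$ with $(Y,\mathscr{M})_{\overline{\mathrm{Frac}(W)}}\simeq(\widetilde{\mathscr{X}},\widetilde{\mathscr{L}})_{\overline{\eta}}$ (after enlarging $F$ so that both the isomorphism and the polarizations are defined and matched), and the $p$-adic valuation of $W$ restricts through this embedding to a place $w\mid p$ of $F$. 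Since $Y$ has complex multiplication, its Kuga--Satake abelian variety has CM and hence potential good reduction everywhere; using Madapusi Pera's extension of the Kuga--Satake period morphism over $\mathbb{Z}[1/2]$ (\cite{Madapusi}, with $p\geq 3$), $(Y,\mathscr{M})$ has potential good reduction at $w$. Replacing $F$ by a further finite extension, we may therefore assume $(Y,\mathscr{M})$ has good reduction at a place $w\mid p$ with \emph{finite} residue field $\kappa(w)$, with a smooth projective polarized model $(\mathscr{Y},\mathscr{M})$ over the completion $\mathcal{O}_{F_w}$.

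Now choose a finite extension $L/\mathrm{Frac}(W)$ inside $\overline{\mathrm{Frac}(W)}$ which contains $F_w$ and over which the isomorphism above is defined, so that $(Y,\mathscr{M})_{F_w}\otimes_{F_w}L\simeq(\widetilde{\mathscr{X}},\widetilde{\mathscr{L}})_{\overline{\eta}}\otimes_{\mathrm{Frac}(W)}L$; let $\mathcal{O}_L$ be the integral closure of $W$ in $L$, a discrete valuation ring containing both $W$ and $\mathcal{O}_{F_w}$ compatibly, with residue field $k$ (as $L/\mathrm{Frac}(W)$ is finite and $k$ is algebraically closed). Then $\widetilde{\mathscr{X}}\otimes_W\mathcal{O}_L$ and $\mathscr{Y}\otimes_{\mathcal{O}_{F_w}}\mathcal{O}_L$ are two smooth projective polarized families of $K3$ surfaces of degree $2d$ over $\mathcal{O}_L$ whose polarized generic fibres are isomorphic; by the valuative criterion of separatedness applied to the separated stack $M^{\circ}_{2d}$, the two families are (uniquely) isomorphic. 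Comparing special fibres gives $X\simeq\mathscr{Y}_s\otimes_{\kappa(w)}k$, where $\mathscr{Y}_s$ is the special fibre of $\mathscr{Y}$, a $K3$ surface over the finite field $\kappa(w)$. Hence $X$ is defined over a finite field.

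I expect the crux to be the last two paragraphs. One delicate input is that the number-field model $Y$ must have potential good reduction at a place above $p$ \emph{with finite residue field}: good reduction over the large absolutely unramified base $W(k)$ is essentially free from the existence of the canonical lift, but transferring it to a $p$-adic ring with finite residue field genuinely requires a good-reduction theorem for $K3$ surfaces, for instance the Kuga--Satake route above for $p\geq 3$, or alternatively the Shioda--Inose structure together with potential good reduction of CM elliptic curves. The other delicate point is the bookkeeping in the comparison step: one must keep track of the two genuinely different bases $W(k)$ (huge, unramified) and $\mathcal{O}_{F_w}$ (finite, possibly ramified over $\mathbb{Z}_p$), handle the twist ambiguity in descending the isomorphism over $\overline{\mathbb{Q}}$ (absorbed by enlarging $F$ and matching polarizations), and exploit the separatedness of $M^{\circ}_{2d}$ to pin down the reduction. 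The remaining ingredients --- Artin's inequality, the Nygaard--Ogus canonical lift, and the classical description of singular $K3$ surfaces --- are available off the shelf.
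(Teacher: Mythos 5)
The paper does not actually prove this statement: its ``proof'' is a one-line pointer to \cite[Theorem 3.7]{Jang} and \cite[Theorem 2.6]{Liedtke}, from which the result is imported wholesale. Your outline is therefore not competing with an argument in the paper; what it does is reconstruct, essentially faithfully, the strategy of the cited proofs. The chain Tate-conjecture-for-supersingular-surfaces plus Artin's inequality forces $h(X)=1$; the Nygaard--Ogus canonical lift carries the full Picard lattice, so its geometric generic fibre is a singular $K3$ surface; Shioda--Inose theory puts that surface over a number field and equips it with CM; and one then reduces back at a place above $p$ and matches the two reductions using separatedness of $M^{\circ}_{2d}$. These are exactly the ingredients used in the literature, and the points you flag as delicate are indeed the delicate ones.

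One step deserves a harder look before you could call this a complete proof. You deduce potential good reduction of $(Y,\mathscr{M})$ at $w$, over a base with \emph{finite} residue field, from good reduction of its Kuga--Satake abelian variety via Madapusi Pera's integral period map. That map is \'etale but not proper, so an $\mathcal{O}_{F_w}$-point of the Shimura variety lifting the generic $K3$ point does not by itself produce an $\mathcal{O}_{F_w}$-point of $M^{\circ}_{2d}$: closing this gap in general is precisely the content of the good-reduction criterion of Matsumoto and Liedtke--Matsumoto, which is conditional on potential semistable reduction (their Assumption $(\star)$) --- the very hypothesis this paper is at pains to avoid. In your situation the gap can be closed, but only by exploiting the extra datum you already possess, namely the smooth model over $W(k)$ furnished by the canonical lift, through a spreading-out/descent argument (the section of the \'etale, separated period map exists after the faithfully flat base change $\mathcal{O}_{F'_{w'}}\to\mathcal{O}_L$ and agrees with the given generic section, so it descends), or alternatively by arguing as Jang and Liedtke do through the Shioda--Inose structure and the potential good reduction of CM elliptic curves rather than through the period map alone. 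With that step made precise, the argument is sound and agrees in substance with the proofs the paper cites.
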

\begin{proof}
See \cite[Theorem 3.7]{Jang} and \cite[Theorem 2.6]{Liedtke}.
\end{proof}

\section{The existence of non-isotrivial $K3$ surfaces with large Picard number}\label{existence}

Assuming the existence of potential semistable reduction, Taelman conditionally proved the existence of $K3$ surfaces over finite fields with given $L$-function, up to finite extensions of the base field \cite{Taelman}. When the characteristic of the base field is not too small, the author recently proved that Taelman's results hold unconditionally \cite{Ito}. As an application, the author proved the following theorem.

\begin{thm}[{\cite[Theorem 7.4]{Ito}}]\label{existenceprop}
Let $p$ be a prime number with $p\geq5$. Let $\rho \in (2\Z)_{>0}$ be a positive even integer and $h\in\Z_{>0}$ a positive integer with $$\rho \leq 22-2h.$$
Then there exists a $K3$ surface $X$ over $\overline{\F}_p$ such that $\rho(X)=\rho$ and $h(X)=h$.
\end{thm}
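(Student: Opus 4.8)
The plan is to realize $X$ as the base change to $\overline{\F}_p$ of a suitable $K3$ surface over a finite field, chosen so that Frobenius acts on its second cohomology with a prescribed characteristic polynomial. The key point is that both invariants we wish to prescribe are read off from Frobenius: for a $K3$ surface $X_0$ over $\F_q$ ($q=p^a$), the height of $X_0$ is determined by the Newton polygon of the crystalline Frobenius on $H^2_{\mathrm{crys}}(X_0/W)$ (for finite height $h$ the slopes are $1-1/h$, $1$, $1+1/h$ with multiplicities $h$, $22-2h$, $h$), while the Tate conjecture for $K3$ surfaces computes the geometric Picard number $\rho(X_{0,\overline{\F}_p})$ as the number of eigenvalues $\alpha$ of the geometric Frobenius on $H^2_{\mathrm{\acute{e}t}}(X_{0,\overline{\F}_p},\Q_\ell)$ for which $\alpha/q$ is a root of unity. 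Both quantities depend only on $X_{0,\overline{\F}_p}$ and are unchanged by replacing $\F_q$ by a finite extension. Thus it suffices to (i) construct a degree $22$ weight-$2$ Weil $q$-polynomial whose Newton polygon is the height-$h$ polygon and which has exactly $\rho$ eigenvalues of the form $q\cdot(\text{root of unity})$, and (ii) show that such a polynomial is the characteristic polynomial of Frobenius of an honest $K3$ surface over a finite field, for which we invoke the unconditional form of Taelman's realization theorem available for $p\geq 5$.

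For step (i) I would split the $22$ eigenvalues into three groups. First, $\rho$ eigenvalues equal to $q$ times roots of unity; these are the algebraic (Tate) classes, all of slope $1$. Second, $2h$ eigenvalues realizing the height: $h$ conjugate pairs $\{\alpha, q^2/\alpha\}$ whose $p$-adic valuations give slopes $1-1/h$ and $1+1/h$. Such Weil $q$-numbers are produced by choosing a CM field in which $p$ splits so that the normalized valuations of a suitable Weil number take the values $1\mp 1/h$; this is the analogue for $K3$-type data of the Honda--Tate recipe for a one-dimensional formal group of height $h$. Third, the remaining $22-2h-\rho$ eigenvalues of slope $1$, arranged in conjugate pairs $\{\alpha, q^2/\alpha\}$ with $\alpha$ not a root of unity times $q$. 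Here the hypothesis that $\rho$ is even is both necessary and used: a self-conjugate eigenvalue satisfies $\alpha=q^2/\alpha$, hence $\alpha=\pm q$, which is a root of unity times $q$ and therefore Tate; consequently the non-Tate eigenvalues occur in genuine conjugate pairs and their number $22-\rho$ is even. Collecting the three groups gives a polynomial $f(T)\in\Z[T]$ stable under $\alpha\mapsto q^2/\alpha$, with roots weight-$2$ Weil $q$-numbers, whose Newton polygon is by construction the height-$h$ polygon and whose count of Tate eigenvalues is $\rho$; the inequality $\rho\leq 22-2h$ is exactly what makes the three multiplicities non-negative.

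For step (ii) I would verify that $f$ satisfies the hypotheses of the realization theorem: the weight-$2$ Weil condition and the functional equation hold by construction, the Newton polygon lies on or above the $K3$ Hodge polygon (with Hodge numbers $1,20,1$) since its slopes lie in $[0,2]$ and are symmetric about $1$ with admissible multiplicities, and the transcendental part carries the CM/endomorphism structure needed to build the Kuga--Satake abelian variety via Honda--Tate. Applying the unconditional version of Taelman's theorem (valid for $p\geq 5$) then produces a $K3$ surface $X_0$ over some finite field $\F_{q^n}$ whose geometric Frobenius eigenvalues on $H^2$ are the $n$-th powers of the roots of $f$; setting $X:=X_{0,\overline{\F}_p}$ and reading off the two invariants from Frobenius as above gives $h(X)=h$ and $\rho(X)=\rho$, both unaffected by the finite extension $\F_{q^n}/\F_q$ (the height is geometric, and $\alpha/q$ is a root of unity if and only if $\alpha^n/q^n$ is). I expect the main obstacle to lie in step (ii), specifically in checking the full list of realizability conditions for $f$ --- above all in producing the height eigenvalues as honest Weil numbers with the prescribed $p$-adic valuations and in arranging the transcendental factor so that the associated Kuga--Satake variety exists by Honda--Tate; the restriction $p\geq 5$ enters precisely here, since it is what removes the conditional potential-semistable-reduction hypothesis underlying Taelman's construction.
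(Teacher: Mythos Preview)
The paper does not prove this theorem at all: its entire proof is the sentence ``See \cite[Theorem 7.4]{Ito}.'' The result is imported as a black box from the author's companion paper, which (as the surrounding text explains) makes Taelman's conditional construction of $K3$ surfaces with prescribed $L$-function unconditional for $p\geq 5$.

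Your sketch is therefore not a reconstruction of this paper's argument but rather an outline of what presumably happens in the cited reference. On that level it is headed in the right direction: reading both $h$ and $\rho$ off the Frobenius characteristic polynomial, manufacturing a degree-$22$ weight-$2$ Weil polynomial with the height-$h$ Newton polygon and exactly $\rho$ Tate eigenvalues, and then invoking Taelman's realization theorem. Your observation that the non-Tate slope-$1$ eigenvalues come in genuine conjugate pairs, forcing $\rho$ even, is correct and is indeed where that hypothesis enters.

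That said, as a proof the sketch has real gaps, which you yourself flag. In step~(ii) Taelman's theorem requires more than the Weil and functional-equation conditions: one must control the sign of the functional equation and a discriminant condition on the transcendental factor so that the candidate transcendental lattice embeds into the $K3$ lattice, and one must actually produce the Weil $q$-numbers with the prescribed $p$-adic valuations in a CM field with the right splitting behavior. These are exactly the points you label as ``the main obstacle,'' and they are where the work in \cite{Ito} lies. So your proposal is a correct high-level plan rather than a proof; for the purposes of the present paper, the intended answer is simply to cite \cite[Theorem 7.4]{Ito}.
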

\begin{proof}
See \cite[Theorem 7.4]{Ito}.
\end{proof}
\begin{prop}\label{2,5}
Let $p$ be a prime number and $h$ a positive integer with $1 \leq h \leq 10$. There exists a $K3$ surface $X$ over $\overline{\F}_p$ with $\rho(X)=22-2h(X)$ and $h(X)=h$ if at least one of the following conditions holds:
\begin{itemize}
\item $p=3$ and $h=10,$ or
\item $p\geq5$.
\end{itemize}
\end{prop}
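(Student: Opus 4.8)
The plan is to reduce Proposition \ref{2,5} to Theorem \ref{existenceprop} together with a single classical example handling the remaining small-characteristic case. For $p \geq 5$, the statement is immediate: apply Theorem \ref{existenceprop} with the even integer $\rho = 22 - 2h$ and the given height $h$, noting that the hypothesis $\rho \leq 22 - 2h$ is satisfied with equality and that $1 \leq h \leq 10$ guarantees $\rho = 22 - 2h \geq 2 > 0$, so $\rho$ is indeed a positive even integer. This produces a $K3$ surface $X$ over $\overline{\F}_p$ with $\rho(X) = 22 - 2h$ and $h(X) = h$; since the Artin-Mazur-Igusa inequality forces $22 - 2h(X) \geq \rho(X) = 22 - 2h$, i.e. $h(X) \leq h$, and $h(X) = h$ is given, there is nothing further to check. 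Thus the entire content of the proposition beyond Theorem \ref{existenceprop} lies in the case $p = 3$, $h = 10$.

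For the case $p = 3$, $h = 10$, we must exhibit a $K3$ surface $X$ over $\overline{\F}_3$ with $h(X) = 10$ and $\rho(X) = 22 - 2h(X) = 2$. A $K3$ surface of height $10$ is precisely one lying in $M^{(10)}_{2d,k} \setminus M^{(11)}_{2d,k}$, i.e. non-supersingular of maximal finite height; these are the generic members of the height stratification. The natural source of an explicit example in characteristic $3$ is a $K3$ surface known to have generic (height $1$, Picard number $1$) behavior fails here, so instead I would invoke a specific surface in the literature — for instance a suitable quartic surface, a Kummer-type surface, or a surface arising from van der Geer–Katsura's analysis \cite{van-katsura} or Taelman-type constructions — for which the height and Picard number in characteristic $3$ have been explicitly computed and equal $10$ and $2$ respectively. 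The key points to verify for such a candidate are: (i) it is a smooth $K3$ surface over $\overline{\F}_3$ (or a finite field, then base-changed); (ii) its formal Brauer group has height exactly $10$, which by \cite[Theorem 15.1]{van-katsura} amounts to the non-vanishing of $g_{10}$ — equivalently the surface is not supersingular but has no nontrivial $g_i$-constraint beyond $h = 10$; and (iii) $\rho(X) = 2$, which can be forced by choosing the surface to have a polarization of some degree $2d$ coprime to $3$ together with exactly one more independent class, or by a direct computation of the Néron–Severi lattice.

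The main obstacle is step (ii)–(iii) for $p = 3$, $h = 10$: producing an \emph{explicit} $K3$ surface over $\overline{\F}_3$ whose height and Picard number are simultaneously pinned down to $10$ and $2$. The difficulty is that Theorem \ref{existenceprop} is only available for $p \geq 5$ because \cite{Ito} needs the characteristic not too small, and Taelman's conditional results \cite{Taelman} require potential semistable reduction of $K3$ surfaces, which is unknown in characteristic $3$. So one cannot simply cite an existence theorem; one needs a hands-on example. I expect the cleanest route is to take a $K3$ surface of \emph{geometric Picard number $2$} over $\overline{\F}_3$ that is already known in the literature — e.g. an elliptic $K3$ with a section, or a quartic with a line, constructed so that the Newton polygon of $H^2_{\mathrm{cris}}$ has slopes $(1/10, \ldots, 9/10)$ repeated appropriately together with a slope-$1$ part of the expected rank, which by the Tate conjecture (available for $K3$ surfaces in characteristic $3$ by \cite{Kim-Madapusi}, \cite{Madapusi}) forces $\rho = 2$ — and then cite or check the relevant crystalline computation. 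Once such a surface is in hand, combining it with the $p \geq 5$ argument above completes the proof.
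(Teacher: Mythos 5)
Your reduction for $p\geq 5$ is correct and matches the paper: apply Theorem \ref{existenceprop} with $\rho=22-2h$. (The paper actually only invokes Theorem \ref{existenceprop} for $1\leq h\leq 9$ and folds $h=10$ into the characteristic-$3$ argument, but as Theorem \ref{existenceprop} is stated, your application to $h=10$, $\rho=2$ is also legitimate.)

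The genuine gap is the case $p=3$, $h=10$, which you correctly identify as the remaining content but do not actually prove: you say you ``would invoke a specific surface in the literature'' with height $10$ and Picard number $2$ over $\overline{\F}_3$, without naming one, and you propose to pin down $\rho(X)=2$ by ``a direct computation of the N\'eron--Severi lattice'' or by choosing the surface to have ``exactly one more independent class'' --- neither of which is something you can simply arrange. The paper's argument is much softer and requires no explicit Picard computation at all. First, the existence of a $K3$ surface over $\overline{\F}_p$ ($p\geq 3$) of height exactly $10$ is classical: it follows from \cite[Corollary 2.2]{Shioda} together with \cite[Theorem 14.2]{van-katsura} (nonemptiness of the stratum $M^{(10)}_{2d,k}\setminus M^{(11)}_{2d,k}$). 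Second, for any such $X$ the Artin--Mazur--Igusa inequality gives $\rho(X)\leq 22-2\cdot 10=2$, and the Tate conjecture for $K3$ surfaces over finite fields (known in characteristic $3$) implies that $\rho(X)$ is \emph{even}; since $\rho(X)\geq 1$ for a projective surface, this forces $\rho(X)=2$. This parity trick is the missing idea in your write-up: it converts the upper bound $\rho\leq 2$ into the equality $\rho=2$ for free. (As a minor point, your description of the Newton polygon of a height-$10$ $K3$ surface is also off: the slopes of $H^2_{\mathrm{cris}}$ are $9/10$, $1$, and $11/10$ with multiplicities $10$, $2$, $10$, not ``$(1/10,\ldots,9/10)$''.)
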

\begin{proof}
 When $p\geq5$ and $1\leq{h}\leq9,$ we have $22-2h \geq 4$. Then, there exists a $K3$ surface $X$ over $\overline{\F}_p$ with $\rho(X)=22-2h(X)$ and $h(X)=h$ by Theorem \ref{existenceprop}.\par
 When $p\geq3$ and $h=10,$ it is well-known that there exists a $K3$ surface $X$ over $\overline{\F}_p$ such that $h(X)=h$ by \cite[Corollary 2.2]{Shioda} and \cite[Theorem 14.2]{van-katsura}. Then, we have $\rho(X) \leq 22-2h=2$. By the Tate conjecture for $K3$ surfaces over finite fields, the Picard number $\rho(X)$ is even; see \cite[the paragraph following Definition 0.3]{Artin74}. (See also \cite[Chapter 17, Corollary 2.9]{Huybrechts}.) Hence we have $\rho(X)=2$.
\end{proof}
Combining Proposition \ref{2,5} with Proposition \ref{defo}, we obtain Theorem \ref{main2}.
\vspace*{0.1in}

{\sc Proof of Theorem \ref{main2}}.
By Proposition \ref{2,5}, there exists a $K3$ surface $X_0$ over $\overline{\F}_p$ with $\rho(X_0)=22-2h(X_0)$ and $h(X_0)=h$. By Proposition \ref{defo}, we have a non-isotrivial $K3$ surface $X$ over the function field of a proper smooth curve over $\overline{\F}_p$ with $\rho(X)=\rho(X_0)$ and $h(X)=h(X_0)$. In particular, $X$ satisfies $\rho(X)=22-2h(X)$ and $h(X)=h$.\hfill \qed

\vspace*{0.1in}

\begin{exm}\label{Yui-Goto}
\rm{In \cite{Yui}, \cite{Goto}, Yui and Goto calculated the heights of the formal Brauer groups of $K3$ surfaces over finite fields concretely. In their list, we can find concrete examples of $K3$ surfaces $X$ over $\overline{\F}_p$ satisfying $\rho(X)=22-2h(X)$. }
\end{exm}
\begin{rem}\label{maincond}
\rm{Using Taelman's theorem \cite[Theorem 3]{Taelman} and Proposition \ref{defo}, it is easy to see that Theorem \ref{main2} holds for any $p$ and any $2\leq{h}\leq{10}$ if we assume \cite[Assumption ($\star$)]{Liedtke-Matsumoto}.}
\end{rem}

\subsection*{Acknowledgements}
The author is deeply grateful to my adviser, Tetsushi Ito, for his kindness, support, and advice. He gave me a lot of invaluable suggestions and pointed out some mistakes in an earlier version of this paper. The author also would like to thank Yuya Matsumoto for helpful suggestions and comments. Moreover the author would like to express his gratitude to the anonymous referees for sincere remarks and comments.

\end{document}